\newtheorem{theorem}{Theorem}[section]
\newtheorem{lemma}[theorem]{Lemma}
\theoremstyle{remark} \newtheorem{remark}[theorem]{Remark}}
\newcommand{\J}{{\mathcal{J}}}
\newcommand{\sph}{{{\mathbb S}^{d-1}}}
\newcommand{\R}{{\mathbb R}}
\newcommand{\ddiv}{{\rm div}}
\newcommand{\paO}{{\partial\Omega}}
\title[Balls as minimizers of an endpoint Gagliardo seminorm on the boundary]
{Characterization of balls as minimizers of\\an endpoint Gagliardo seminorm on the boundary}
\date{\today}
\author[A. Mas]{Albert Mas}
\subjclass[2010]{Primary 46E35, Secondary 31B15, 33C10.}
\keywords{Sobolev trace space, Gagliardo seminorm, Bessel function, nonlocal perimeter.}
\thanks{Mas was partially supported by
MTM2017-84214 and MTM2017-83499 projects of the MCINN (Spain),
2017-SGR-358 project of the AGAUR (Catalunya) and ERC-2014-ADG project 
HADE Id.\! 669689 (European Research Council).}
\address{A.\! Mas, Departament de Matem\`atiques i Inform\`atica,
Universitat de Barcelona. Gran Via de les Corts Catalanes 585,
08007 Barcelona (Spain).}
\email{albert.mas@ub.edu}
\begin{document}

\begin{abstract}
Given a bounded $C^2$ domain $\Omega\subset\R^d$ with $d\geq3$, we prove a sharp inequality which relates the perimeter of $\paO$ to the endpoint Gagliardo seminorm in $W^{r,2}(\paO)$, corresponding to $r=0$, of the normal vector field on $\paO$.  The proof of the inequality relies on the use of Bessel potentials and a monotonicity formula; we also show that balls are the unique minimizers. For $1/2<r<1$, the Gagliardo seminorm of the normal vector field on $\paO$ is related to a fractional second fundamental form which arises in the study of nonlocal perimeters and nonlocal minimal surfaces.
\end{abstract}
\maketitle

\section{Introduction}
There are many results in the literature characterizing balls in terms of sharp inequalities of integral type. The classical isoperimetric inequality relating perimeter and volume is among the most famous ones. Other celebrated results are the Pólya-Szeg\"o inequality on the Newtonian capacity \cite{PS}, see also \cite{Mendez} for other Riesz capacities, and the Faber-Krahn inequality \cite{Faber,Krahn} on the first eigenvalue of the Dirichlet Laplacian on a domain. Several results of this nature can be proved using rearrangement, a very powerful technique which, in may situations, is used both to show the inequality and to find the optimizers; it has important applications to Sobolev embeddings into Lebesgue spaces as well. One can also find in the literature sharp isoperimetric-type inequalities for fractional (or nonlocal) quantities which characterize balls as the unique minimizers, see \cite{FS} for example.
Other interesting works are \cite{Reichel1,Reichel2} and \cite{HMMPT} where, in the first ones, the balls are determined by the fact that the equilibrium distribution with respect to the Newtonian capacity is constant along the surface, and in the last one, the characterization is obtained in terms of the angle between the interior and exterior Hardy spaces. 

In this article we characterize balls as minimizers of an endpoint Gagliardo seminorm on the boundary.  More precisely, given $0<r<1$ and a bounded Lipschitz domain $\Omega\subset\R^d$, the Sobolev-Slobodeckij trace space 
$W^{r,2}(\paO)$ is the space of functions 
$u\in L^2(\paO)$ such that $[u]_{r,\paO}<+\infty$, where the Gagliardo seminorm
$[\,\cdot\,]_{r,\paO}$ is defined by
\begin{equation}
[u]_{r,\paO}^2
:=\int_{\paO}\!\int_{\paO}\frac{|u(x)-u(y)|^2}{|x-y|^{d-1+2r}}
\,d\upsigma(x)\,d\upsigma(y).
\end{equation}
Here $\upsigma$ denotes the $(d-1)$-dimensional Hausdorff measure restricted to 
$\paO$ (the surface measure). The purpose of this work is to prove the following sharp inequality between $\upsigma(\paO)$ and the seminorm $[\,\cdot\,]_{r,\paO}$ of the normal vector field on $\paO$ in the endpoint case $r=0$, and to show that the equality is attained if and only if $\Omega$ is a ball. 
This is an scenario where, in principle, one cannot directly apply rearrangement arguments due to the lack of a volume constraint.

\begin{theorem}\label{main thm intro}
Let $d\geq3$ be an integer, $\Omega\subset\R^d$ be a bounded $C^2$ domain and $\nu$ denote the outward unit normal vector field on $\paO$. Then,
\begin{equation}\label{ineq norm1 main1}
\int_{\paO}\!\int_{\paO}\frac{|\nu(x)-\nu(y)|^2}{|x-y|^{d-1}}
\,d\upsigma(x)\,d\upsigma(y)\geq \upsigma(\paO)\int_{\sph}|x-e|^{3-d}\,d\upsigma(x),
\end{equation}
where $\sph$ denotes the unit sphere of $\R^d$ and $e\in\sph$ is any fixed unit vector. The equality in \eqref{ineq norm1 main1} is attained if and only if 
$\Omega$ is a ball.
\end{theorem}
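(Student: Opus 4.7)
The plan is to combine a Fourier--Bessel representation of the double integral with a monotonicity argument that reduces the inequality to the explicit ball computation.

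First, I would linearize via $|\nu(x)-\nu(y)|^2 = 2 - 2\nu(x)\cdot\nu(y)$, so the left-hand side splits as $2(P-Q)$ with $P = \iint |x-y|^{1-d}\,d\upsigma\,d\upsigma$ and $Q = \iint \nu(x)\cdot\nu(y)\,|x-y|^{1-d}\,d\upsigma\,d\upsigma$. The divergence theorem gives the key identity $\widehat{\nu_j\upsigma}(\xi) = -2\pi i\,\xi_j\,\widehat{\chi_\Omega}(\xi)$, which replaces the vector-valued surface measure $\nu\upsigma$ by data depending only on the indicator of $\Omega$. Combining this with the Gamma representation $|x-y|^{1-d} = \Gamma((d-1)/2)^{-1}\int_0^\infty t^{(d-3)/2}e^{-t|x-y|^2}\,dt$, Fourier-transforming the Gaussians and exchanging orders of integration, I expect to reach
\begin{equation*}
I \;=\; c_d\int_{\R^d}\frac{|\hat\upsigma(\xi)|^2 - 4\pi^2|\xi|^2|\widehat{\chi_\Omega}(\xi)|^2}{|\xi|}\,d\xi
\end{equation*}
for an explicit constant $c_d$, i.e.\ a $\dot H^{-1/2}$-type quadratic form in the Fourier data of $\Omega$ and $\paO$.

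In this representation the ball case is essentially a classical Bessel computation: $\hat\upsigma_{\sph}$ and $\widehat{\chi_{B_1}}$ are given, up to explicit factors, by $|\xi|^{(2-d)/2}J_{(d-2)/2}(2\pi|\xi|)$ and $|\xi|^{-d/2}J_{d/2}(2\pi|\xi|)$, and the recurrence $J_{\nu+1}(r)=(\nu/r)J_\nu(r)-J_\nu'(r)$ collapses the integrand to closed form. Evaluating the resulting integral should produce exactly $\upsigma(\sph)\int_{\sph}|x-e|^{3-d}d\upsigma(x)$, so both sides of \eqref{ineq norm1 main1} agree on $B_1$; joint scale-invariance of $I$ and $\upsigma(\paO)$ then extends this to every ball.

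For a general $C^2$ domain the central input must be a monotonicity formula. A natural candidate is to lift the problem via a Caffarelli--Silvestre-type extension associated with $(-\Delta)^{1/2}$: let $u$ be the harmonic extension of $\chi_\Omega$ to $\R^{d+1}_+$, rewrite the integrand above as a weighted energy of $u$ near $\paO\times\{0\}$, and consider at each boundary point $x_0\in\paO$ a radial energy $\Phi_{x_0}(r)$ on $B_r^+(x_0,0)$. Showing $\Phi_{x_0}$ is monotone in $r$ with an unambiguous limit as $r\to 0^+$ should give a pointwise lower bound on the density of $I$ at $x_0$; integrating in $x_0$ against $d\upsigma$ would then pick up precisely the factor $\upsigma(\paO)\cdot C_d$ on the right.

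The main obstacle is twofold: constructing the right monotone quantity, and characterizing the equality case. Without a volume constraint, rearrangement is unavailable, so the rigidity must come from within the monotonicity formula itself --- equality forcing an overdetermined (Serrin-type) condition on $\paO$ whose only solutions are spheres. Proving this rigidity, and verifying its compatibility with the Bessel-potential identity at every frequency, is where the $C^2$ regularity and the restriction $d\geq 3$ should play their essential role.
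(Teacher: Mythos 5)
Your outline correctly identifies the general shape of the argument --- linearize via $|\nu(x)-\nu(y)|^2=2-2\nu(x)\cdot\nu(y)$, build a one-parameter quantity interpolating between the Gagliardo-type energy and the perimeter, and extract rigidity from the equality case --- but the two steps that actually constitute the proof are left as acknowledged obstacles, so this is a research plan rather than a proof. Concretely: (1) the splitting $2(P-Q)$ is not legitimate as written, since $P=\iint|x-y|^{1-d}\,d\upsigma\,d\upsigma$ and $Q$ are each infinite ($|x-y|^{1-d}$ is exactly non-integrable on a $(d-1)$-dimensional surface); only the difference converges, because $1-\nu(x)\cdot\nu(y)=O(|x-y|^2)$ on a $C^2$ boundary. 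The Fourier-side expression inherits the same divergence at high frequency, and its integrand is not pointwise signed or comparable between $\Omega$ and a ball, so the representation by itself yields neither the inequality nor the equality case. (2) The proposed monotone quantity --- a radial energy $\Phi_{x_0}(r)$ of the harmonic extension at each boundary point --- is not constructed, and the statement it would deliver, namely that $\int_{\paO}|\nu(x_0)-\nu(y)|^2|x_0-y|^{1-d}\,d\upsigma(y)\geq\int_{\sph}|x-e|^{3-d}\,d\upsigma(x)$ at \emph{every} $x_0\in\paO$, is a pointwise assertion strictly stronger than the theorem and is nowhere justified. The rigidity is likewise deferred to an unproved ``Serrin-type overdetermined condition.''

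For contrast, the paper's monotone quantity is global in a scale parameter $a$, not local at boundary points: one takes $\Lambda(\Omega,a)=\int_{\Omega^c}\int_{\Omega}G_a(x-y)\,dx\,dy$ with $G_a$ the fundamental solution of $-\Delta+a^2$, converts between boundary and solid integrals by Gauss--Green, and reduces everything to the pointwise Cauchy--Schwarz bound $\nu(x)\cdot R_{x-y}\nu(y)\leq 1$, where $R_{x-y}$ is the reflection across the hyperplane orthogonal to $x-y$; equality for all pairs $x,y\in\paO$ characterizes balls by the Reflection Lemma. The endpoints of the monotone family are then computed by monotone/dominated convergence as $a\to0$ and by D\'avila's BV-approximation theorem as $a\to+\infty$. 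The ingredient missing from your proposal is precisely an analogue of that pointwise reflection inequality: without it, neither the monotonicity nor the rigidity follows.
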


This result can be rewritten in terms of the endpoint Gagliardo seminorm as follows: 
{\em Given $d\geq3$, let $\Omega\subset\R^d$ be a bounded $C^2$ domain and $B\subset\R^d$ be a ball such that $\upsigma(\paO)=\upsigma(\partial B)$. Then,
$[\nu]_{0,\paO}\geq[\nu]_{0,\partial B}$
and the equality holds if and only if $\Omega$ is a ball.} 
Here, we used the symbols 
$\upsigma$ and $\nu$ to denote the surface measure and the unit normal vector field on both $\paO$ and $\partial B$. 
We remark that the regularity assumptions on $\Omega$ in Theorem \ref{main thm intro} are taken to avoid technicalities during the proofs in the article, and they can be relaxed substantially. 

By a simple argument, in Section \ref{s remark norm1} we also prove that
\begin{equation}\label{ineq norm1 eq2}
\int_\paO\!\int_\paO\frac{|\nu(x)-\nu(y)|}{|x-y|^{d-1}}\,d\upsigma(x)\,d\upsigma(y)
\geq\upsigma(\paO)\upsigma(\sph)
\end{equation}
and the equality in \eqref{ineq norm1 eq2} is attained if and only if $\Omega$ is a ball. This is the result analogous to Theorem \ref{main thm intro} when we replace $|\nu(x)-\nu(y)|^2$ by 
$|\nu(x)-\nu(y)|$. However, it is not clear how to get \eqref{ineq norm1 main1} from \eqref{ineq norm1 eq2}, since $|\nu(x)-\nu(y)|^2$ is smaller than 
$|\nu(x)-\nu(y)|$ for $x$ close to $y$. 

Observe that
$|\nu(x)-\nu(y)|^2=2-2\nu(x)\cdot\nu(y)=2(\nu(x)-\nu(y))\cdot\nu(x)$
for all $x,y\in\paO$.
Therefore,
$[\nu]_{r,\paO}^2=2\|c_{\paO,2r-1}\|^2_{L^2(\paO)}$
where, given $x\in\paO$,
\begin{equation}\label{fsff defi}
c_{\paO,s}(x):=\Big(\operatorname{P.\! V.} \int_{\paO}\frac{(\nu(x)-\nu(y))\cdot\nu(x)}{|x-y|^{d+s}}\,d\upsigma(y)\Big)^{1/2}
\end{equation}
is the so-called $s$-fractional second fundamental form of $\paO$ at $x$. 
In particular, for smooth bounded domains $\Omega\subset\R^d$,
\begin{equation}
\int_{\paO}\!\int_{\paO}\frac{|\nu(x)-\nu(y)|^2}{|x-y|^{d-1}}
\,d\upsigma(x)\,d\upsigma(y)=\lim_{r\searrow0}[\nu]_{r,\paO}^2
=\lim_{s\searrow-1}2\int_{\paO}c_{\paO,s}^2\,d\upsigma.
\end{equation}
For $0<s<1$, $c_{\paO,s}$ is an important object in the study of nonlocal minimal surfaces which arise as critical points of the $s$-fractional perimeter. Indeed,  $c_{\paO,s}$ appears in the fractional Jacobi operator $\J_{\paO,s}$ defined by
\begin{equation}
\J_{\paO,s}w(x):=\operatorname{P.\! V.} \int_{\paO}\frac{w(y)-w(x)}{|x-y|^{d+s}}\,d\upsigma(y)+c_{\paO,s}^2(x)w(x)\qquad \text{for }x\in\paO,
\end{equation}
where $w:\paO\to\R$ is sufficiently smooth.
The Jacobi operator $\J_{\paO,s}$ was found in \cite{DPW,FFMMM} while computing the second variation of the $s$-fractional perimeter.

The proof of Theorem \ref{main thm intro} relies in a nonlocal perimeter, in this case defined by
\begin{equation}\label{Lambda definition}
\Lambda(\Omega,a):=\int_{\Omega^c}\!\int_{\Omega}G_a(x-y)\,dx\,dy,
\end{equation} 
where $G_a$ is the fundamental solution of the Helmholtz operator $-\Delta+a^2$, namely,
\begin{equation}
G_a(x):=\frac{a^{d/2-1}}{(2\pi)^{d/2}}\,|x|^{1-d/2}K_{d/2-1}(a|x|)
\qquad\text{for $x\in\R^d\setminus\{0\}$ and $a>0$.}
\end{equation}
Here, $K_{d/2-1}$ denotes the modified Bessel function of the second kind and order $d/2-1$, see Section \ref{s Bessel ineq} for more details. 
The notion of nonlocal (or fractional) perimeter was introduced in the work of Caffarelli, Roquejoffre and Savin \cite{CRS} regarding nonlocal minimal surfaces associated to the $s$-fractional perimeter, given by the Riesz kernel $|x|^{-d-s}$, and the fractional Laplacian 
$(-\Delta)^{s/2}$; it has attracted much attention since then, see \cite{CFSW,DPDV,DPW} for example and \cite{NPV} for an introduction to the fractional Laplacian. Other notions of nonlocal perimeters given by suitable kernels and the associated nonlocal minimal surfaces have also been considered in the recent years, see \cite{CiSerraVal,FFMMM}. In particular, it is of interest to study the connection between classical and nonlocal perimeters as well as the relation to volume. In the case of the Riesz kernel 
$|x|^{-d-s}$, it is known that the classical perimeter and the volume are obtained by taking the limit $s\to1$ and $s\to0$, respectively, after a suitable rescaling; see \cite{ADM,MS,Val1} and \cite{AV} for the case of $s$-fractional curvatures. For other nonlocal perimeters, one can still recover the classical perimeter by a limiting argument based on rescaling, the reader may look at \cite{Davila,MRT}, for example. Our proof of Theorem \ref{main thm intro} is partially inspired in these ideas. 

It is of interest to see if the methods presented in this work could be adapted to study $[\nu]_{r,\paO}$ in the general case $0<r<1$, where we cover the regime $0<s<1$ commented below \eqref{fsff defi} by taking $r=(s+1)/2$. The expected inequality would be
\begin{equation}\label{ineq norm1 conject}
\int_{\paO}\!\int_{\paO}\frac{|\nu(x)-\nu(y)|^2}{|x-y|^{d-1+2r}}
\,d\upsigma(x)\,d\upsigma(y)\geq 
\upsigma(\paO)^{1-\frac{2r}{d-1}}\upsigma(\sph)^{\frac{2r}{d-1}}
\int_{\sph}|x-e|^{3-d-2r}\,d\upsigma(x).
\end{equation}
For $0<r<1$, the question of whether \eqref{ineq norm1 conject} holds or not requires further study.

Theorem \ref{main thm intro} is a straightforward application of the following theorem. Its proof is based on a monotonicity formula involving Bessel potentials and the fundamental solution of the Helmholtz operator $-\Delta+a^2$. In more detail, we define
\begin{equation}\label{defi Phi intro}
\begin{split}
\Phi(\Omega,a)&:=\frac{1}{(2\pi)^{d/2}}
\int_{\paO}\!\int_{\paO}\Big(\int_{a|x-y|}^{+\infty}
t^{d/2-1}K_{d/2-1}(t)\,dt\Big)\frac{|\nu(x)-\nu(y)|^2}{|x-y|^{d-1}}
\,d\upsigma(x)\,d\upsigma(y)\\
&\quad+\frac{4a^{d/2}}{(2\pi)^{d/2}}\int_{\Omega^c}\!\int_{\Omega}
\Big\{\Big(1-\frac{d}{2}\Big)
\frac{K_{d/2-1}(a|x-y|)}{|x-y|^{1+d/2}}
-a\,\frac{K_{d/2-1}'(a|x-y|)}{|x-y|^{d/2}}\Big\}\,dx\,dy
\end{split}
\end{equation}
and we show that $\Phi(\Omega,a)$ is monotone in $a\in(0,+\infty)$.
To prove this monotonicity, we first find a sharp inequality between a solid integral and a boundary integral related to $G_a$, see Theorem \ref{thm fund ineq} below.
The proof of this inequality is mainly based on the Gauss-Green theorem and the Reflection Lemma which characterizes the balls of 
$\R^d$. This part of the article works for all integer $d\geq2$ and is developed in Section \ref{s Bessel ineq}. 
Using the sharp inequality, we prove that $\Phi(\Omega,a)$ is nonincreasing on 
$a\in(0,+\infty)$ and is constant if and only if $\Omega$ is a ball. Moreover, we can compute its limit when $a\to0$ and $a\to+\infty$. In the former one we essentially get 
$[\nu]^2_{0,\paO}$, and in the later one we obtain $\upsigma(\paO)$ modulo some precise constants; the assumption $d\geq3$ is only used to compute the limit when $a\to0$, see Remark \ref{rmk d>2}. The following theorem, which summarizes these conclusions, is the main result in this article; its proof is given in Section \ref{s monotonicity perimeter}.

\begin{theorem}\label{main thm}
Let $d\geq 3$ and $\Omega\subset\R^d$ be a bounded $C^2$ domain. Then, 
\begin{eqnarray}
\frac{\partial\Phi}{\partial a}\,(\Omega,a)
&\leq& 0\quad\text{for all }a>0, \label{monotony}\\
\lim_{a\to0}\Phi(\Omega,a)
&=&\kappa\int_{\paO}\!\int_{\paO}\frac{|\nu(x)-\nu(y)|^2}{|x-y|^{d-1}}
\,d\upsigma(x)\,d\upsigma(y), \label{limit zero} \\
\lim_{a\to+\infty}\Phi(\Omega,a)
&=&\kappa\,\upsigma(\paO)\int_{\sph}|x-e|^{3-d}\,d\upsigma(x) \label{limit infty},
\end{eqnarray}
where $\kappa:=(2\pi)^{-d/2}\int_{0}^{+\infty}t^{d/2-1}K_{d/2-1}(t)\,dt$ is a positive and finite constant.

The equality in \eqref{monotony} is attained for some (and thus for all) $a>0$ if and only if $\Omega$ is a ball. This means that, as a function of 
$a\in(0,+\infty)$, if $\Omega$ is a ball then
$\Phi(\Omega,a)$ is constant, and if $\Omega$ is not a ball then 
$\Phi(\Omega,a)$ is strictly decreasing. 
\end{theorem}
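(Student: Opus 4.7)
The overall strategy is to prove \eqref{monotony} by differentiating $\Phi(\Omega,a)$ under the integral sign and reducing the resulting expression, via the divergence theorem, to the sharp inequality of Theorem~\ref{thm fund ineq}; the two limits \eqref{limit zero} and \eqref{limit infty} will then follow from asymptotic analysis of the modified Bessel function $K_{d/2-1}$ near $0$ and $+\infty$. For the boundary part in \eqref{defi Phi intro}, Leibniz's rule yields
\[
\partial_a\!\int_{a|x-y|}^{+\infty}t^{d/2-1}K_{d/2-1}(t)\,dt=-a^{d/2-1}|x-y|^{d/2}K_{d/2-1}(a|x-y|),
\]
and dividing by $|x-y|^{d-1}$ and recognizing $G_a$ gives
\[
\partial_a\Phi_{\mathrm{bdy}}(\Omega,a)=-\!\int_{\paO}\!\int_{\paO}G_a(x-y)\,|\nu(x)-\nu(y)|^2\,d\upsigma(x)\,d\upsigma(y),
\]
which, via $|\nu(x)-\nu(y)|^2=2-2\nu(x)\cdot\nu(y)$, splits into a scalar boundary double integral of $G_a$ and one of $G_a\,\nu(x)\cdot\nu(y)$.

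For the solid part, the Bessel recurrences $K_\nu'(t)=\tfrac{\nu}{t}K_\nu(t)-K_{\nu+1}(t)$ and $K_{\nu+1}(t)=K_{\nu-1}(t)+\tfrac{2\nu}{t}K_\nu(t)$ combine into the identity $(d/2-1)K_{d/2-1}(t)+tK'_{d/2-1}(t)=-tK_{d/2-2}(t)$, which collapses the bracketed integrand of \eqref{defi Phi intro} into $a\,K_{d/2-2}(a|x-y|)/|x-y|^{d/2}$, so that
\[
\Phi_{\mathrm{sol}}(\Omega,a)=\frac{4a^{d/2+1}}{(2\pi)^{d/2}}\int_{\Omega^c}\!\int_\Omega\frac{K_{d/2-2}(a|x-y|)}{|x-y|^{d/2}}\,dx\,dy.
\]
Differentiating this in $a$ and using the divergence identity $\int_\Omega\nabla_y G_a(x-y)\,dy=\int_{\paO}G_a(x-y)\,\nu(y)\,d\upsigma(y)$ (together with its counterpart on $\Omega^c$) to transfer the solid integrals to the boundary, $\partial_a\Phi_{\mathrm{sol}}$ should produce boundary double integrals that, combined with those arising from $\partial_a\Phi_{\mathrm{bdy}}$, yield precisely the solid-versus-boundary quantity bounded by Theorem~\ref{thm fund ineq}. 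The sharp inequality then gives $\partial_a\Phi\leq 0$, and its equality case (characterized by the Reflection Lemma) identifies balls as the unique domains for which $\partial_a\Phi$ vanishes.

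For the limits, dominated convergence applied to $\Phi_{\mathrm{bdy}}$ as $a\to 0^+$ uses $\int_0^{+\infty}t^{d/2-1}K_{d/2-1}(t)\,dt=(2\pi)^{d/2}\kappa<+\infty$, whose finiteness at $t=0$ requires precisely $d\geq 3$ since $K_\nu(t)\sim\tfrac{1}{2}\Gamma(\nu)(t/2)^{-\nu}$ as $t\to 0^+$ for $\nu>0$; integrability on $\paO\times\paO$ of $|\nu(x)-\nu(y)|^2|x-y|^{1-d}$ follows from the $C^2$ hypothesis on $\Omega$. Small-argument asymptotics of $K_{d/2-2}$ combined with the $a^{d/2+1}$ prefactor show $\Phi_{\mathrm{sol}}\to 0$ as $a\to 0^+$. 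For $a\to+\infty$, $\Phi_{\mathrm{bdy}}$ vanishes by exponential decay of $K_{d/2-1}$ and dominated convergence, while in $\Phi_{\mathrm{sol}}$ the change of variables $z=a(x-y)$ combined with tubular coordinates near $\paO$ concentrates the integral on the boundary; using $K_{d/2-2}(t)\sim\sqrt{\pi/(2t)}\,e^{-t}$ and integrating in $z$ over the half-space exterior to each $x\in\paO$ extracts the factor $\kappa\,\upsigma(\paO)\int_{\sph}|x-e|^{3-d}\,d\upsigma(x)$. The main obstacle will be the explicit reduction of $\partial_a\Phi_{\mathrm{sol}}$ into boundary integrals that align \emph{exactly} with the two sides of Theorem~\ref{thm fund ineq}, a calculation that requires careful use of the Bessel recurrences together with precise bookkeeping of signs and orientations when integrating by parts over $\Omega$ and $\Omega^c$; the rigorous justification of the tubular concentration at $a\to+\infty$ on $C^2$ domains will also require some attention.
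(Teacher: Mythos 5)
Your overall strategy coincides with the paper's: differentiate $\Phi$ in $a$, identify $\partial_a\Phi$ with minus the quantity controlled by Theorem~\ref{thm fund ineq}, and compute the two limits separately. The boundary derivative is correct, and so is the recurrence-based rewriting $\Phi_{\mathrm{sol}}(\Omega,a)=\frac{4a^{d/2+1}}{(2\pi)^{d/2}}\int_{\Omega^c}\!\int_\Omega K_{d/2-2}(a|x-y|)\,|x-y|^{-d/2}\,dx\,dy$. But the step you single out as the ``main obstacle'' is misdirected: no divergence theorem and no transfer of the solid integral to the boundary are needed, because Theorem~\ref{thm fund ineq} already pairs a \emph{solid} integral with a boundary one. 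The very recurrence you quote, $\nu K_\nu(t)+tK_\nu'(t)=-tK_{\nu-1}(t)$, gives the pointwise identity $\frac{a^{d/2+1}}{(2\pi)^{d/2}}\frac{K_{d/2-2}(a|x|)}{|x|^{d/2}}=-a^2\frac{\partial}{\partial a}\big(\frac{G_a(x)}{|x|^2}\big)$ (this is \eqref{deriv F is double deriv G} in the paper), whence $\partial_a\Phi_{\mathrm{sol}}=-4\int_{\Omega^c}\!\int_\Omega\frac{\partial}{\partial a}\big(a^2\frac{\partial}{\partial a}(G_a(x-y)/|x-y|^2)\big)\,dx\,dy$ and \eqref{monotony}, together with its equality case, \emph{is} Theorem~\ref{thm fund ineq}. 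So this part of your plan is correct once you replace the integration by parts with that one-line identity.

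The genuine gap is in \eqref{limit infty}. Your blow-up argument identifies $\lim_{a\to+\infty}\Phi_{\mathrm{sol}}$ as $C\,|\paO|$ for some dimensional constant $C$ (this is precisely D\'avila's theorem, which the paper invokes), but you merely \emph{assert} that $C=\kappa\int_{\sph}|x-e|^{3-d}\,d\upsigma(x)$; that identity between Bessel integrals is nontrivial and you give no computation of it. The paper sidesteps it: since $\Phi(B,\cdot)$ is constant on balls by the equality case of \eqref{monotony}, the constant is pinned down by equating the $a\to+\infty$ limit with the already-established $a\to0$ limit on a ball. You should either adopt that trick or actually carry out the kernel computation. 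Two further corrections. First, your attribution of the hypothesis $d\geq3$ to the finiteness of $\kappa$ is wrong: $\int_0^{+\infty}t^{d/2-1}K_{d/2-1}(t)\,dt$ converges also for $d=2$, where $K_0$ has only a logarithmic singularity at the origin. What fails for $d=2$ is the integrability of $K_{d/2-2}(|w|)\,|w|^{-d/2}$ near $w=0$, which is what makes $\Phi_{\mathrm{sol}}\to0$ as $a\to0$ (cf.\ Remark~\ref{rmk d>2}). Second, for that $a\to0$ step, ``small-argument asymptotics plus the prefactor'' is insufficient because $\Omega^c$ is unbounded and $\int_{\Omega^c}\!\int_\Omega|x-y|^{2-d}\,dx\,dy=+\infty$; you need the global integrability of the kernel, e.g.\ the scaling bound $\Phi_{\mathrm{sol}}(\Omega,a)\leq\frac{4a}{(2\pi)^{d/2}}\,|\Omega|\int_{\R^d}K_{d/2-2}(|w|)\,|w|^{-d/2}\,dw=O(a)$, which uses the exponential decay of $K_{d/2-2}$ at infinity and, near the origin, exactly the restriction $d\geq3$.
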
 

A final comment is in order. The reader familiar with heat-flow monotonicity techniques will observe similarities with our approach. Several integral inequalities in euclidean analysis can be proved using adequate (sub/super)solutions of the heat equation $\partial_t-\Delta$, for which certain monotone behavior holds in $t>0$. Then, the evaluation at different times yields an inequality which, in many situations, generates sharp constants and identifies extremizers; see \cite{B} for a survey on the subject.
In certain cases the heat operator is replaced by other differential operators. In this work we use the Helmholtz operator to construct the flow.

Regarding the notation, given a bounded $C^{2}$ domain $\Omega\subset\R^d$, throughout this work $\upsigma$ denotes the $(d-1)$-dimensional Hausdorff measure restricted to $\paO$ (the surface measure) and $\nu$ the outward unit normal vector field on $\paO$. We also denote by $|\Omega|$ the Lebesgue measure of $\Omega$ and, for simplicity of notation, we set 
$|\paO|:=\upsigma(\paO)$. 

\section*{Acknowledgement}
I thank Vladimir Lotoreichik for useful conversations and, specially, for suggesting the use of the Reflection Lemma. I also acknowledge Matteo Cozzi and Gyula Csato for pointing out the argument which yields \eqref{ineq norm1 eq2}. Finally, I am grateful to Xavier Cabr\'e for his advice while preparing this article.

\section{A sharp integral inequality involving Bessel potentials}\label{s Bessel ineq}

We begin this section by introducing the Bessel potential that will be used in the sequel, namely, a suitable fundamental solution of $-\Delta+a^2$ for $a>0$.
Given $\alpha\geq0$, let $K_{\alpha}$ denote the modified Bessel function of the second kind and order $\alpha$, see \cite{AbraStegun,Watson} for the definition and properties. The Bessel function 
$K_{\alpha}$ satisfies the differential equation
\begin{equation}\label{diff eq Bessel}
t^2K_{\alpha}''(t)=(t^2+\alpha^2)K_{\alpha}(t)-tK_{\alpha}'(t)
\quad\text{for all }t>0.
\end{equation}
Throughout this section we assume that $d\geq2$ is an integer. Set
\begin{equation}\label{def G}
G(x):=\frac{|x|^{1-d/2}}{(2\pi)^{d/2}}\,K_{d/2-1}(|x|),\qquad
H(x):=\frac{|x|^{1-d/2}}{(2\pi)^{d/2}}\,K_{d/2-1}'(|x|)
\end{equation}
for $x\in\R^d\setminus\{0\}$ and
\begin{equation}\label{def Ga}
G_a(x):=a^{d-2}G(ax),\qquad
H_a(x):=a^{d-2}H(ax)
\end{equation}
for $a>0$ and $x\in\R^d\setminus\{0\}$. From \cite[9.6.24]{AbraStegun} or \cite[(5) in page 181]{Watson} 
we know that 
\begin{equation}\label{K int exp form}
K_{\alpha}(t)=\int_0^{+\infty} e^{-t\cosh r}\cosh(\alpha r)\,dr,
\end{equation}
thus  $G_a(x)>0$ for all $x\in\R^d\setminus\{0\}$. 
It is well known that $G_a\in L^1(\R^d)$ and that, given $f\in L^\infty(\R^d)$, the function
\begin{equation}
\varphi(x)=(G_a*f)(x)=\int_{\R^d}G_a(x-y)f(y)\,dy
\end{equation}
belongs to $L^1(\R^d)$ and satisfies $(-\Delta+a^2)\varphi=f$, see 
\cite[Section 7.4]{Teschl} for example. Therefore, $(-\Delta+a^2)G_a=\delta_0$ in the sense of distributions, where $\delta_0$ denotes the Dirac measure centered at the origin. We refer to $G_a$ as the Bessel potential. In particular, taking $f=1$ it is clear that
\begin{equation}\label{Ga int prop}
a^2\int_{\R^d}G_a(x)\,dx=(-\Delta+a^2)(G_a*f)=1.
\end{equation}

The next lemma contains some useful formulas involving $G_a$ that will be used in the sequel.
\begin{lemma}\label{formula daGa}
The following identities hold for all $a>0$ and $x\in\R^d\setminus\{0\}$:
\begin{eqnarray}
&&\frac{\partial}{\partial a}\Big(\frac{G_a(x)}{|x|^2}\Big)
=\frac{H_a(x)}{|x|}+\Big(\frac{d}{2}-1\Big)\frac{G_a(x)}{a|x|^2},
\label{formula daGa eq1}\\
&&\frac{\partial}{\partial a}\Big(a^{3-d}\frac{\partial}{\partial a}
\Big(\frac{G_a(x)}{|x|^2}\Big)\Big)=a^{3-d}G_a(x).\label{formula daGa eq2}
\end{eqnarray}
\end{lemma}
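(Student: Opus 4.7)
The plan is a direct verification using the explicit formula obtained by unfolding \eqref{def G}--\eqref{def Ga}:
\begin{equation*}
G_a(x)=(2\pi)^{-d/2}\,a^{d/2-1}|x|^{1-d/2}K_{d/2-1}(a|x|),\qquad
H_a(x)=(2\pi)^{-d/2}\,a^{d/2-1}|x|^{1-d/2}K_{d/2-1}'(a|x|).
\end{equation*}
In this form, $G_a$ and $H_a$ differ only by replacing $K_{d/2-1}$ with its derivative $K_{d/2-1}'$, which makes the chain-rule computations very transparent.

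For \eqref{formula daGa eq1}, I would differentiate $G_a(x)$ in $a$ using product and chain rule. Differentiating the prefactor $a^{d/2-1}$ yields exactly $(d/2-1)G_a(x)/a$, while differentiating $K_{d/2-1}(a|x|)$ in its argument produces $|x|\,H_a(x)$; dividing by $|x|^2$ gives the claimed identity.

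For \eqref{formula daGa eq2}, I would multiply \eqref{formula daGa eq1} by $a^{3-d}$ and differentiate once more in $a$. The term $(d/2-1)\,a^{2-d}G_a(x)/|x|^2$ is handled using \eqref{formula daGa eq1} again, and is routine. The term $a^{3-d}H_a(x)/|x|$ is the main point: writing it as $(2\pi)^{-d/2}a^{2-d/2}|x|^{-d/2}\cdot |x|K_{d/2-1}'(a|x|)$ and differentiating in $a$ brings down one power $(2-d/2)$ from the $a$-prefactor and produces a $K_{d/2-1}''(a|x|)$ term from the chain rule. Here I would apply the Bessel ODE \eqref{diff eq Bessel} at $t=a|x|$, $\alpha=d/2-1$, rewriting
\begin{equation*}
K_{d/2-1}''(a|x|)=\Bigl(1+\tfrac{(d/2-1)^2}{(a|x|)^2}\Bigr)K_{d/2-1}(a|x|)-\tfrac{1}{a|x|}K_{d/2-1}'(a|x|)
\end{equation*}
to eliminate the second derivative in favor of $K_{d/2-1}$ and $K_{d/2-1}'$, hence of $G_a$ and $H_a$.

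After this substitution I would collect terms by type: the contributions proportional to $H_a(x)/|x|$ carry coefficients $(d/2-1)a^{2-d}$ and $(1-d/2)a^{2-d}$ (one from each of the two pieces) and cancel, and the contributions proportional to $G_a(x)/|x|^2$ carry coefficients $\pm(d/2-1)^2 a^{1-d}$ and also cancel, leaving the isolated term $a^{3-d}G_a(x)$, which is \eqref{formula daGa eq2}. The main obstacle is purely bookkeeping: keeping track of the several exponents of $a$ and $|x|$ and making sure the cancellations of the $H_a$ and of the $|x|^{-2}G_a$ terms match; there is no conceptual subtlety beyond the use of \eqref{diff eq Bessel}.
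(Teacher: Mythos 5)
Your proposal is correct and follows essentially the same route as the paper: both verify \eqref{formula daGa eq1} by direct differentiation of the explicit formula for $G_a$, and both obtain \eqref{formula daGa eq2} by differentiating again and invoking the Bessel equation \eqref{diff eq Bessel} to eliminate $K_{d/2-1}''$, the only difference being that the paper first derives the ODE $\partial_a^2 f=\frac{d-3}{a}\partial_a f+G_a$ for $f=G_a(x)/|x|^2$ and then factors it, while you expand $\partial_a(a^{3-d}\partial_a f)$ directly. (There is a harmless factor-of-$|x|$ slip in your intermediate rewriting of $a^{3-d}H_a(x)/|x|$, but your stated cancellations of the $H_a/|x|$ and $G_a/|x|^2$ contributions are the correct ones.)
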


\begin{proof}
Using \eqref{def G} and \eqref{def Ga} we compute
\begin{equation}
\begin{split}
\frac{\partial}{\partial a}\Big(a\,\frac{G_a(x)}{|x|^2}\Big)
&=\frac{\partial}{\partial a}\Big(a^{d/2}\frac{|x|^{-1-d/2}}{(2\pi)^{d/2}}\,K_{d/2-1}(a|x|)\Big)\\
&=a^{d/2}\frac{|x|^{-d/2}}{(2\pi)^{d/2}}\,K_{d/2-1}'(a|x|)
+\frac{d}{2}\,a^{d/2-1}\frac{|x|^{-1-d/2}}{(2\pi)^{d/2}}\,K_{d/2-1}(a|x|)\\
&=a\,\frac{H_a(x)}{|x|}+d\,\frac{G_a(x)}{2|x|^2}.
\end{split}
\end{equation}
From here, \eqref{formula daGa eq1} follows directly. 
Then, using \eqref{formula daGa eq1} and \eqref{diff eq Bessel},
\begin{equation}
\begin{split}
\frac{\partial^2}{\partial a^2}\Big(\frac{G_a(x)}{|x|^2}\Big)
&=\frac{\partial}{\partial a}\Big(\frac{H_a(x)}{|x|}+\Big(\frac{d}{2}-1\Big)\frac{G_a(x)}{a|x|^2}\Big)\\
&=\frac{\partial}{\partial a}\Big(a^{d/2-1}\frac{|x|^{-d/2}}{(2\pi)^{d/2}}\,K_{d/2-1}'(a|x|)
+\Big(\frac{d}{2}-1\Big)a^{d/2-2}\frac{|x|^{-1-d/2}}{(2\pi)^{d/2}}\,K_{d/2-1}(a|x|)\Big)\\
&=(d-3)a^{d/2-2}\frac{|x|^{-d/2}}{(2\pi)^{d/2}}\,K_{d/2-1}'(a|x|)
+a^{d/2-1}\frac{|x|^{1-d/2}}{(2\pi)^{d/2}}K_{d/2-1}(a|x|)\\
&\quad+\frac{1}{2}\,(d-2)(d-3)a^{d/2-3}\frac{|x|^{-1-d/2}}{(2\pi)^{d/2}}K_{d/2-1}(a|x|)\\
&=\frac{d-3}{a^2}\Big(a\,\frac{H_a(x)}{|x|}+(d-2)\frac{G_a(x)}{2|x|^2}\Big)+G_a(x)
=\frac{d-3}{a}\frac{\partial}{\partial a}\Big(\frac{G_a(x)}{|x|^2}\Big)+G_a(x).
\end{split}
\end{equation}
Therefore,
\begin{equation}
\Big(\frac{\partial}{\partial a}-\frac{d-3}{a}\Big)\frac{\partial}{\partial a}
\Big(\frac{G_a(x)}{|x|^2}\Big)=G_a(x)
\end{equation}
and thus
\begin{equation}
\frac{\partial}{\partial a}\Big(a^{3-d}\frac{\partial}{\partial a}
\Big(\frac{G_a(x)}{|x|^2}\Big)\Big)
=a^{3-d}\Big(\frac{\partial}{\partial a}+\frac{3-d}{a}\Big)\frac{\partial}{\partial a}
\Big(\frac{G_a(x)}{|x|^2}\Big)=a^{3-d}G_a(x),
\end{equation}
which corresponds to \eqref{formula daGa eq2}.
\end{proof}

Given a bounded $C^2$ domain $\Omega\subset\R^d$ and $a>0$, we now focus on the nonlocal perimeter $\Lambda(\Omega,a)$ introduced in \eqref{Lambda definition}, whose kernel is the Bessel potential $G_a$; see \cite{CRS, MRT, Val1} for other nonlocal perimeters. Since $G_a$ is nonnegative, from \eqref{Ga int prop} we can trivially estimate 
$0<a^2\Lambda(\Omega,a)\leq |\Omega|$. Indeed, thanks to \eqref{Ga int prop}, we can write 
\begin{equation}
a^2\Lambda(\Omega,a)=
|\Omega|-\int_{\R^d}(a^2G_a*\chi_\Omega)\,\chi_{\Omega},
\end{equation}
where $\chi_\Omega$ denotes the characteristic function of $\Omega$.

Using the Gauss-Green theorem and that $(-\Delta+a^2)G_a=\delta_0$, in the following lemma we prove two identities which relate $\Lambda(\Omega,a)$ to certain double boundary integrals. These identities will be a key tool to prove the main theorem in this section, namely Theorem \ref{thm fund ineq}.

\begin{lemma}
Let $\Omega\subset\R^d$ be a bounded $C^2$ domain. Then, the following holds for all $a>0$:
\begin{eqnarray}
&&a^2\Lambda(\Omega,a)=\int_{\paO}\!\int_{\paO}G_a(x-y)\,
\nu(x)\cdot\nu(y)\,d\upsigma(x)\,d\upsigma(y)\label{formula Ga ident1},\\
&&\begin{split}
\int_{\paO}\!\int_{\paO}G_a(x-y)
\Big(\nu(x)&\cdot\frac{x-y}{|x-y|}\Big)\Big(\nu(y)\cdot\frac{x-y}{|x-y|}\Big)
\,d\upsigma(x)\,d\upsigma(y)\label{formula Ga ident2}\\
&=a^2\Lambda(\Omega,a)
+a(d-1)\int_{\Omega^c}\!\int_{\Omega}
\frac{\partial}{\partial a}\Big(\frac{G_a(x-y)}{|x-y|^2}\Big)\,dx\,dy.
\end{split}
\end{eqnarray}
\end{lemma}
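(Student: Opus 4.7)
For \eqref{formula Ga ident1}, I would exploit the distributional identity $(-\Delta+a^2)G_a=\delta_0$: for any $y\in\Omega^c$, the function $x\mapsto G_a(x-y)$ solves $-\Delta u+a^2u=0$ classically on $\Omega$, so the divergence theorem yields $a^2\int_\Omega G_a(x-y)\,dx=\int_\paO \nabla_x G_a(x-y)\cdot\nu(x)\,d\upsigma(x)$. Integrating over $y\in\Omega^c$, interchanging order, and applying the divergence theorem a second time on $\Omega^c$ to $\int_{\Omega^c}\nabla_x G_a(x-y)\,dy=-\int_{\Omega^c}\nabla_y G_a(x-y)\,dy$---after excising a small ball $B_\epsilon(x)$ whose boundary contribution is $O(G_a(\epsilon)\,\epsilon^{d-1})=O(\epsilon)\to 0$---produces the right-hand side of \eqref{formula Ga ident1}.

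For \eqref{formula Ga ident2}, my plan is a double application of the divergence theorem that turns each of the two surface integrals into a volume integral. I would first fix $y\in\paO$ and introduce the vector field
\[
V(x,y):=\frac{G_a(x-y)(x-y)\,[(x-y)\cdot\nu(y)]}{|x-y|^2},
\]
whose contraction with $\nu(x)$ is exactly the integrand of the LHS. Using Lemma \ref{formula daGa} together with the radial identity $\nabla G_a(z)\cdot z=(1-d/2)G_a(z)+a|z|H_a(z)$, a direct computation gives $\nabla_x\cdot V(x,y) = F(|x-y|)\,(x-y)\cdot\nu(y)$ with $F(r):=\tfrac{d}{2}G_a(r)/r^2+aH_a(r)/r$. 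The divergence theorem on $\Omega$, with $B_\epsilon(y)$ excised (boundary contribution again $O(\epsilon)$), then yields
\[
\int_\paO G_a(x-y)\,\frac{[(x-y)\cdot\nu(x)]\,[(x-y)\cdot\nu(y)]}{|x-y|^2}\,d\upsigma(x) = \int_\Omega F(|x-y|)(x-y)\cdot\nu(y)\,dx.
\]
Integrating over $y\in\paO$, swapping order via Fubini, and for fixed $x\in\Omega$ applying the divergence theorem on $\Omega^c$ to the radial vector field $y\mapsto F(|x-y|)(x-y)$---which satisfies $\nabla_y\cdot[F(|x-y|)(x-y)]=-[rF'(r)+dF(r)]$ with $r=|x-y|$ and decays exponentially at infinity---would then show that the LHS of \eqref{formula Ga ident2} equals $\int_{\Omega^c}\int_\Omega[rF'(r)+dF(r)]\,dx\,dy$.

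The last step is algebraic. Using $G_a'(r)=(1-d/2)G_a(r)/r+aH_a(r)$ together with the Bessel ODE \eqref{diff eq Bessel} to derive $aH_a'(r)=-\tfrac{d}{2}aH_a(r)/r+a^2G_a(r)+(d/2-1)^2G_a(r)/r^2$, one computes
\[
rF'(r)+dF(r)=a^2G_a(r)+(d-1)\frac{aH_a(r)}{r}+\frac{(d-1)(d-2)}{2}\cdot\frac{G_a(r)}{r^2}.
\]
By Lemma \ref{formula daGa}, the last two terms coincide with $a(d-1)\frac{\partial}{\partial a}(G_a(r)/r^2)$, so integration against $dx\,dy$ over $\Omega^c\times\Omega$ reproduces exactly the right-hand side of \eqref{formula Ga ident2}. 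I expect the main difficulties to be the careful bookkeeping of the $|x-y|^{2-d}$ singularities when justifying the divergence theorem applications (verifying that each $\partial B_\epsilon$ contribution vanishes in the limit, and that Fubini applies), and the somewhat delicate algebra in the final simplification---this is where the specific coefficient $a(d-1)$ featured in \eqref{formula Ga ident2} emerges.
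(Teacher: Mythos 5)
Your proposal is correct and follows essentially the same route as the paper: both identities are obtained by applying the Gauss--Green theorem twice to the very same vector fields (your $F(|x-y|)(x-y)$ with $F(r)=\tfrac{d}{2}G_a(r)/r^2+aH_a(r)/r$ is exactly the field $\big((x-y)\cdot\nabla G_a(x-y)+(d-1)G_a(x-y)\big)\tfrac{x-y}{|x-y|^2}$ used there), followed by the same simplification via the modified Bessel equation and the identity \eqref{formula daGa eq1}. The only differences are organizational---you track radial derivatives through $G_a'$ and $H_a'$ instead of $\nabla G_a$ and $D^2G_a$---and your coefficient bookkeeping, including the emergence of $a(d-1)$, checks out.
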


\begin{proof}
Recall that $(-\Delta+a^2)G_a=\delta_0$. 
Therefore, for $x,\,y\in\R^d$ with $x\neq y$ we have
\begin{equation}\label{formula Ga ident1 eq1}
a^2G_a(x-y)=(\Delta G_a)(x-y)=\ddiv_x\big((\nabla G_a)(x-y)\big)
=-\ddiv_x\nabla_y \big(G_a(x-y)\big),
\end{equation}
where $\ddiv_x$ and $\nabla_y$ mean the divergence and the gradient on the $x$ and $y$ variables, respectively. From \eqref{formula Ga ident1 eq1} and the Gauss-Green theorem applied twice we easily get \eqref{formula Ga ident1}.

We now focus on \eqref{formula Ga ident2}. A computation shows that
\begin{equation}\label{formula Ga ident2 eq1}
\begin{split}
\ddiv_x\Big\{G_a(x-y)&\Big(\nu(y)\cdot\frac{x-y}{|x-y|^2}\Big)(x-y)\Big\}\\
&=\Big((x-y)\cdot(\nabla G_a)(x-y)+(d-1)G_a(x-y)\Big)
\Big(\nu(y)\cdot\frac{x-y}{|x-y|^2}\Big),
\end{split}
\end{equation}
and that
\begin{equation}\label{formula Ga ident2 eq2}
\begin{split}
\ddiv_y\Big\{\Big((x-y)&\cdot(\nabla G_a)(x-y)+(d-1)G_a(x-y)\Big)
\frac{x-y}{|x-y|^2}\Big\}\\
&=-\frac{1}{|x-y|^2}\Big\{(d-1)(d-2)G_a(x-y)+2(d-1)(x-y)\cdot(\nabla G_a)(x-y)\\
&\quad+(x-y)[D^2G_a(x-y)](x-y)^t\Big\}.
\end{split}
\end{equation}
By $(x-y)[D^2G_a(x-y)](x-y)^t$ we mean
$\sum_{1\leq i,\,j\leq d} (x_i-y_i)(x_j-y_j)\partial_i\partial_j G_a(x-y)$, 
where we also used the notation $x=(x_1,\ldots,x_d)\in\R^d$.
Therefore, by the Gauss-Green theorem, \eqref{formula Ga ident2 eq1} and \eqref{formula Ga ident2 eq2}, we get
\begin{equation}\label{formula Ga ident2 eq3}
\begin{split}
\int_{\paO}\!\int_{\paO}
G_a&(x-y)\Big(\nu(x)\cdot\frac{x-y}{|x-y|}\Big)\Big(\nu(y)\cdot\frac{x-y}{|x-y|}\Big)
\,d\upsigma(x)\,d\upsigma(y)\\
&=\int_{\Omega^c}\!\int_{\Omega}
\frac{1}{|x-y|^2}\Big\{(d-1)(d-2)G_a(x-y)+2(d-1)(x-y)\cdot(\nabla G_a)(x-y)\\
&\quad+(x-y)[D^2G_a(x-y)](x-y)^t\Big\}\,dx\,dy.
\end{split}
\end{equation}

We can compute $\nabla G_a$ and $D^2G_a$ on the right hand side of \eqref{formula Ga ident2 eq3} using the definition of $G_a$ in terms of the Bessel function $K_{d/2-1}$. More precisely, by \eqref{def Ga} and \eqref{def G},
\begin{equation}
\begin{split}
\nabla G_a(x)=a^{d-1}(\nabla G)(ax)=
\frac{a^{d/2-1}}{(2\pi)^{d/2}}\Big(\Big(1-\frac{d}{2}\Big){K_{d/2-1}(a|x|)}
+a|x|K_{d/2-1}'(a|x|)\Big)\frac{x}{|x|^{1+d/2}}
\end{split}
\end{equation}
and
\begin{equation}
\begin{split}
\partial_i\partial_j&G_a(x)=\frac{a^{d/2-1}}{(2\pi)^{d/2}}\Big\{
\frac{\delta_{i,j}}{|x|^{1+d/2}}\Big(\Big(1-\frac{d}{2}\Big){K_{d/2-1}(a|x|)}
+a|x|K_{d/2-1}'(a|x|)\Big)\\
&+\frac{x_ix_j}{|x|^{3+d/2}}\Big(\Big(\frac{d^2}{4}-1\Big){K_{d/2-1}(a|x|)}+
(1-d)a|x|{K_{d/2-1}'(a|x|)}+a^2|x|^2K_{d/2-1}''(a|x|)\Big)\Big\},
\end{split}
\end{equation}
where $\delta_{i,j}=1$ if $i=j$ and $\delta_{i,j}=0$ if $i\neq j$. With this at hand, we obtain
\begin{equation}\label{formula Ga ident2 eq4}
\begin{split}
(d-1)(d-2)G_a(x-y)&+2(d-1)(x-y)\!\cdot\!(\nabla G_a)(x-y)
+(x-y)[D^2G_a(x-y)](x-y)^t\\
&=\frac{a^{d/2-1}}{(2\pi)^{d/2}}|x-y|^{1-d/2}
\Big\{\frac{1}{4}\,d(d-2)K_{d/2-1}(a|x-y|)\\
&\quad+da|x-y|K_{d/2-1}'(a|x-y|)
+a^2|x-y|^2K_{d/2-1}''(a|x-y|)\Big\}.
\end{split}
\end{equation}
Using \eqref{diff eq Bessel} we see that \eqref{formula Ga ident2 eq4} can be rewritten as
\begin{equation}\label{formula Ga ident2 eq5}
\begin{split}
(d-1)(d-2)G_a(x-y)&+2(d-1)(x-y)\!\cdot\!(\nabla G_a)(x-y)
+(x-y)[D^2G_a(x-y)](x-y)^t\\
&=\frac{a^{d/2-1}}{(2\pi)^{d/2}}|x-y|^{1-d/2}\Big\{(d-1)a|x-y|K_{d/2-1}'(a|x-y|)\\
&\quad+\Big(a^2|x-y|^2+\frac{1}{2}\,(d-1)(d-2)\Big)K_{d/2-1}(a|x-y|)\Big\}.
\end{split}
\end{equation}
From \eqref{formula Ga ident2 eq5}, \eqref{def G} and \eqref{def Ga} we deduce that
\begin{equation}
\begin{split}
(d-1)(d-2)G_a&(x-y)+2(d-1)(x-y)\cdot(\nabla G_a)(x-y)
+(x-y)[D^2G_a(x-y)](x-y)^t\\
&=(d-1)a|x-y|H_a(x-y)
+\Big(a^2|x-y|^2+\frac{1}{2}\,(d-1)(d-2)\Big)G_a(x-y).
\end{split}
\end{equation}
Plugging this into \eqref{formula Ga ident2 eq3} we conclude that
\begin{equation}
\begin{split}
\int_{\paO}\!\int_{\paO}
G_a&(x-y)\Big(\nu(x)\cdot\frac{x-y}{|x-y|}\Big)\Big(\nu(y)\cdot\frac{x-y}{|x-y|}\Big)
\,d\upsigma(x)\,d\upsigma(y)\\
&=\int_{\Omega^c}\!\int_{\Omega}\Big\{(d-1)a\,\frac{H_a(x-y)}{|x-y|}
+\Big(a^2+\frac{(d-1)(d-2)}{2|x-y|^2}\Big)G_a(x-y)\Big\}\,dx\,dy\\
&=a^2\Lambda(\Omega,a)
+\int_{\Omega^c}\!\int_{\Omega}(d-1)\Big\{a\,\frac{H_a(x-y)}{|x-y|}
+\Big(\frac{d}{2}-1\Big)\frac{G_a(x-y)}{|x-y|^2}\Big\}\,dx\,dy,
\end{split}
\end{equation}
which gives \eqref{formula Ga ident2} thanks to \eqref{formula daGa eq1}.
\end{proof}

The following is the main result in this section and provides a sharp inequality, which is only attained when $\Omega$ is a ball, relating a solid and a boundary integral given in terms of the Bessel potential. From this sharp inequality we will extract the monotone behavior mentioned in the introduction which will lead to the proof of Theorem \ref{main thm intro} through Theorem \ref{main thm}.
\begin{theorem}\label{thm fund ineq}
Let $\Omega\subset\R^d$ be a bounded $C^2$ domain. Then,
\begin{equation}
\begin{split}
0\leq4\int_{\Omega^c}\!\int_{\Omega}\frac{\partial}{\partial a}
\Big(a^{2}\frac{\partial}{\partial a}&
\Big(\frac{G_a(x-y)}{|x-y|^2}\Big)\Big)\,dx\,dy
+ \int_{\paO}\!\int_{\paO}G_a(x-y)|\nu(x)-\nu(y)|^2
\,d\upsigma(x)\,d\upsigma(y)
\end{split}
\end{equation}
for all $a>0$. The equality is attained for some (and thus for all) $a>0$ if and only if $\Omega$ is a ball.
\end{theorem}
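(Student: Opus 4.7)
The plan is to convert the solid integral on the right-hand side of the claim into a double boundary integral, and then observe that the resulting integrand is a manifest sum of two nonnegative quantities. To execute the conversion, I would combine the product rule with \eqref{formula daGa eq2} (factoring $a^{2}=a^{d-1}\cdot a^{3-d}$) to derive
\begin{equation*}
\frac{\partial}{\partial a}\Big(a^{2}\frac{\partial}{\partial a}\Big(\frac{G_a(x-y)}{|x-y|^2}\Big)\Big) = (d-1)\,a\,\frac{\partial}{\partial a}\Big(\frac{G_a(x-y)}{|x-y|^2}\Big) + a^{2}G_a(x-y).
\end{equation*}
Integrating over $\Omega^c\times\Omega$, the second term contributes $a^{2}\Lambda(\Omega,a)$ by \eqref{Lambda definition}, while \eqref{formula Ga ident2} replaces the first term by $\int_{\paO}\!\int_{\paO}G_a(x-y)(\nu(x)\cdot r)(\nu(y)\cdot r)\,d\upsigma(x)\,d\upsigma(y)-a^{2}\Lambda(\Omega,a)$, with $r:=(x-y)/|x-y|$. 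The two copies of $a^{2}\Lambda(\Omega,a)$ cancel, and multiplying by $4$ yields
\begin{equation*}
4\!\int_{\Omega^c}\!\!\int_{\Omega}\!\frac{\partial}{\partial a}\Big(a^{2}\frac{\partial}{\partial a}\Big(\frac{G_a(x-y)}{|x-y|^2}\Big)\Big)\,dx\,dy = 4\!\int_{\paO}\!\int_{\paO}\!G_a(x-y)(\nu(x)\cdot r)(\nu(y)\cdot r)\,d\upsigma(x)\,d\upsigma(y).
\end{equation*}

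After this substitution, the claim reduces to the nonnegativity of
\begin{equation*}
\int_{\paO}\!\int_{\paO}G_a(x-y)\,\big[\,4(\nu(x)\cdot r)(\nu(y)\cdot r)+|\nu(x)-\nu(y)|^{2}\,\big]\,d\upsigma(x)\,d\upsigma(y).
\end{equation*}
Setting $\alpha:=\nu(x)\cdot r$ and $\beta:=\nu(y)\cdot r$, the elementary identity $4\alpha\beta=(\alpha+\beta)^{2}-(\alpha-\beta)^{2}$, together with $\alpha-\beta=(\nu(x)-\nu(y))\cdot r$, rewrites the bracket as
\begin{equation*}
\big((\nu(x)+\nu(y))\cdot r\big)^{2} + \Big(|\nu(x)-\nu(y)|^{2} - \big((\nu(x)-\nu(y))\cdot r\big)^{2}\Big).
\end{equation*}
The first summand is a square; the second is the squared norm of the component of $\nu(x)-\nu(y)$ orthogonal to $r$, hence nonnegative by Cauchy--Schwarz. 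Combined with the pointwise positivity of $G_a$ (immediate from \eqref{K int exp form}), the integral is $\geq 0$.

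For the equality case, vanishing of the integral forces both summands above to vanish $\upsigma\otimes\upsigma$-almost everywhere, and by continuity of $\nu$ on the $C^{2}$ surface $\paO$, for every pair of distinct points $x,y\in\paO$. The Cauchy--Schwarz equality then implies that $\nu(x)-\nu(y)$ is parallel to $x-y$ throughout $\paO\times\paO\setminus\{x=y\}$, and the Reflection Lemma identifies $\Omega$ as a ball. Conversely, on a ball of radius $R$ centered at $p$ one has $\nu(x)-\nu(y)=(x-y)/R$ and $(\nu(x)+\nu(y))\cdot(x-y)=|x-p|^{2}-|y-p|^{2}=0$, so both summands vanish identically and equality holds for every $a>0$. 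I expect the main obstacle to be the bookkeeping in the solid-to-boundary conversion of the first step, where one must carefully exploit \eqref{formula Ga ident2} so that the $a^{2}\Lambda(\Omega,a)$ contributions cancel exactly; thereafter the inequality and its equality case follow from the elementary decomposition above together with a classical geometric characterization of the sphere.
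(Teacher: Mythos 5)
Your argument is correct and is essentially the paper's proof in reorganized form: the conversion of the solid integral into $4\int_{\paO}\!\int_{\paO}G_a(x-y)(\nu(x)\cdot r)(\nu(y)\cdot r)\,d\upsigma(x)\,d\upsigma(y)$, with $r:=(x-y)/|x-y|$, uses exactly \eqref{formula daGa eq2} and \eqref{formula Ga ident2} as in the paper, and your sum-of-two-squares decomposition of the bracket is algebraically identical to the paper's Cauchy--Schwarz step, since $4(\nu(x)\cdot r)(\nu(y)\cdot r)+|\nu(x)-\nu(y)|^2=\big|\nu(x)-\nu(y)+2(\nu(y)\cdot r)\,r\big|^2$ is the squared distance from $\nu(x)$ to the reflection of $\nu(y)$ across the hyperplane orthogonal to $x-y$. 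One small point in the equality case: you should feed \emph{both} vanishing conditions into the Reflection Lemma, not just the parallelism of $\nu(x)-\nu(y)$ and $x-y$ (which alone does not exclude the degenerate possibility $\nu(x)=\nu(y)$ with $\nu(y)\cdot(x-y)\neq0$); combined, the two conditions give precisely the reflection identity $\nu(x)=\nu(y)-2\frac{\nu(y)\cdot(x-y)}{|x-y|^2}(x-y)$ that the cited lemma requires.
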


\begin{proof}
Using \eqref{formula Ga ident1}, we can split
\begin{equation}\label{CS eq3}
\begin{split}
a^2\Lambda(\Omega,a)
&=\int_{\paO}\!\int_{\paO}G_a(x-y)\,\nu(x)\cdot\nu(y)\,d\upsigma(x)\,d\upsigma(y)\\
&=\int_{\paO}\!\int_{\paO}G_a(x-y)\,\nu(x)\cdot
\Big(\nu(y)-2\,\frac{\nu(y)\cdot(x-y)}{|x-y|^2}\,(x-y)\Big)\,d\upsigma(x)\,d\upsigma(y)\\
&\quad+2\int_{\paO}\!\int_{\paO}G_a(x-y)\Big(\nu(x)\cdot\frac{x-y}{|x-y|}\Big)
\Big(\nu(y)\cdot\frac{x-y}{|x-y|}\Big)\,d\upsigma(x)\,d\upsigma(y)
=:I_1+I_2.
\end{split}
\end{equation}

Form \eqref{formula Ga ident2}, we see that
\begin{equation}
\begin{split}
I_2=2a^2\Lambda(\Omega,a)
+2a(d-1)\int_{\Omega^c}\!\int_{\Omega}
\frac{\partial}{\partial a}\Big(\frac{G_a(x-y)}{|x-y|^2}\Big)\,dx\,dy.
\end{split}
\end{equation}

Regarding $I_1$, note that 
\begin{equation}
\begin{split}
\Big|\nu(y)-2\,\frac{\nu(y)\cdot(x-y)}{|x-y|^2}\,(x-y)\Big|^2
=1,
\end{split}
\end{equation}
hence the Cauchy-Schwarz inequality shows that
\begin{equation}\label{CS eq1}
\begin{split}
I_1&\leq\int_{\paO}\!\int_{\paO}G_a(x-y)\,
\frac{1}{2}\Big(|\nu(x)|^2+\Big|\nu(y)-2\,\frac{\nu(y)\cdot(x-y)}{|x-y|^2}\,(x-y)\Big|^2\Big)
\,d\upsigma(x)\,d\upsigma(y)\\
&=\int_{\paO}\!\int_{\paO}G_a(x-y)\,d\upsigma(x)\,d\upsigma(y).
\end{split}
\end{equation}
Furthermore, the equality in \eqref{CS eq1} is attained if and only if
\begin{equation}\label{CS eq2}
\nu(x)=\nu(y)-2\,\frac{\nu(y)\cdot(x-y)}{|x-y|^2}\,(x-y)\quad\text{for all }x,\,y\in\paO.
\end{equation}
But, since $\Omega$ is bounded, the Reflection Lemma shows that \eqref{CS eq2} holds if and only if $\Omega$ is a ball, see \cite[Lemma 5.3 on page 45]{Chipot}. Therefore, combining \eqref{CS eq3}, \eqref{CS eq1} and \eqref{CS eq2}, we get that
\begin{equation}\label{CS eq4}
\begin{split}
0\leq \int_{\paO}\!\int_{\paO}G_a(x-y)\,d\upsigma(x)\,d\upsigma(y)
+a^2\Lambda(\Omega,a)
+2a(d-1)\int_{\Omega^c}\!\int_{\Omega}
\frac{\partial}{\partial a}\Big(\frac{G_a(x-y)}{|x-y|^2}\Big)\,dx\,dy,
\end{split}
\end{equation}
and the equality is attained if and only if $\Omega$ is a ball. Thanks to \eqref{formula Ga ident1}, we can rewrite \eqref{CS eq4} as
\begin{equation}\label{CS eq5}
\begin{split}
-2a(d-1)\!\int_{\Omega^c}\!\int_{\Omega}
\frac{\partial}{\partial a}\Big(\frac{G_a(x-y)}{|x-y|^2}\Big)\,dx\,dy
\leq \int_{\paO}\!\int_{\paO}\!G_a(x-y)\big(1+\nu(x)\cdot\nu(y)\big)\,d\upsigma(x)\,d\upsigma(y).
\end{split}
\end{equation}
Subtracting $2a^2\Lambda(\Omega,a)$ on both sides of \eqref{CS eq5} and using \eqref{formula daGa eq2} and \eqref{formula Ga ident1}, we arrive at
\begin{equation}
\begin{split}
-2\int_{\Omega^c}\!\int_{\Omega}\frac{\partial}{\partial a}
\Big(a^{2}\frac{\partial}{\partial a}
\Big(\frac{G_a(x-y)}{|x-y|^2}\Big)\Big)&\,dx\,dy
=-2\int_{\Omega^c}\!\int_{\Omega}\frac{\partial}{\partial a}
\Big(a^{d-1}a^{3-d}\frac{\partial}{\partial a}
\Big(\frac{G_a(x-y)}{|x-y|^2}\Big)\Big)dx\,dy\\
&=-2(d-1)a^{d-2}\int_{\Omega^c}\!\int_{\Omega}a^{3-d}
\frac{\partial}{\partial a}\Big(\frac{G_a(x-y)}{|x-y|^2}\Big)\,dx\,dy\\
&\quad-2a^{d-1}\int_{\Omega^c}\!\int_{\Omega}\frac{\partial}{\partial a}\Big(a^{3-d}\frac{\partial}{\partial a}
\Big(\frac{G_a(x-y)}{|x-y|^2}\Big)\Big)\,dx\,dy\\
&=-2(d-1)a\int_{\Omega^c}\!\int_{\Omega}
\frac{\partial}{\partial a}\Big(\frac{G_a(x-y)}{|x-y|^2}\Big)\,dx\,dy
-2a^{2}\Lambda(\Omega,a)\\
&\leq \int_{\paO}\!\int_{\paO}G_a(x-y)\big(1-\nu(x)\cdot\nu(y)\big)
\,d\upsigma(x)\,d\upsigma(y)\\
&=\frac{1}{2}\int_{\paO}\!\int_{\paO}G_a(x-y)|\nu(x)-\nu(y)|^2
\,d\upsigma(x)\,d\upsigma(y),
\end{split}
\end{equation}
which proves the inequality in the statement of the theorem.
As before, the equality is attained if and only if $\Omega$ is a ball. 
\end{proof}

\section{A monotonicity formula related to the perimeter}\label{s monotonicity perimeter}
In this section we deal with the function $\Phi(\Omega,a)$ introduced in \eqref{defi Phi intro}. We prove that it is monotone on $a$ thanks to Theorem \ref{thm fund ineq}. Furthermore, $\Phi(\Omega,a)$ is constant if and only if $\Omega$ is a ball, and it is strictly decreasing otherwise. As we explained in the introduction, we compute its limit when 
$a\to0$ and $a\to+\infty$, obtaining $[\nu]^2_{0,\paO}$ and $\upsigma(\paO)$ modulo some precise constants, respectively. 

Throughout this section, we assume that $d\geq3$ is an integer, see Remark \ref{rmk d>2} in what concerns the case $d=2$.
In order to study the asymptotic behavior of $\Phi(\Omega,a)$ with respect to $a$, we introduce two auxiliary functions related to the Bessel potential.
Set
\begin{equation}\label{def F}
W(x):=\frac{1}{(2\pi)^{d/2}}\int_{|x|}^{+\infty}t^{d/2-1}K_{d/2-1}(t)\,dt,
\qquad
F(x):=\Big(1-\frac{d}{2}\Big)\frac{G(x)}{|x|}-H(x)
\end{equation}
for $x\in\R^d\setminus\{0\}$ and
\begin{equation}\label{def Fa}
W_a(x):=W(ax),\qquad
F_a(x):=a^{d}F(ax)
\end{equation}
for $a>0$ and $x\in\R^d\setminus\{0\}$,
where $G$ and $H$ are given in \eqref{def G}.
The following lemma states the relation between $W,\,F$ and $G$, as well as some properties that will be useful for computing the above-mentioned limits with respect to $a$.

\begin{lemma}\label{lemma W F}
The following identities hold for all $a>0$ and $x\in\R^d\setminus\{0\}$:
\begin{eqnarray}
\int_a^{+\infty}G_s(x)\,ds=\frac{W_a(x)}{|x|^{d-1}},\label{int Gs is W}\\
-a^2\frac{\partial}{\partial a}\Big(\frac{G_a(x)}{|x|^2}\Big)
=\frac{F_a(x)}{|x|}.\label{deriv F is double deriv G}
\end{eqnarray}
Furthermore, 
\begin{itemize}
\item[$(i)$]
$\lim_{a\to+\infty}W_a(x)=0$ and 
$\lim_{a\to0}W_a(x)=\kappa$ for all $x\in\R^d\setminus\{0\}$, where 
$0<\kappa<+\infty$ is the constant given in Theorem \ref{main thm}, namely,
\begin{equation}\label{W(0)}
\kappa:=\frac{1}{(2\pi)^{d/2}}\int_{0}^{+\infty}t^{d/2-1}K_{d/2-1}(t)\,dt
=\frac{2^{1-d/2}}{|\sph|}\int_{0}^{+\infty}
\frac{\cosh\big((d/2-1) t\big)}{(\cosh t)^{d/2}}\,dt,
\end{equation}
\item[$(ii)$]
$F(x)>0$ for all $x\in\R^d\setminus\{0\}$ and
$\int_{\R^d}(1+|x|^{-1})F(x)\,dx<+\infty$.
\end{itemize}
\end{lemma}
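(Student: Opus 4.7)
The plan is to verify the four assertions in turn, relying only on the definitions, the scaling relations \eqref{def Ga} and \eqref{def Fa}, Lemma \ref{formula daGa}, and the standard integral and recurrence properties of the modified Bessel function $K_{d/2-1}$. For \eqref{int Gs is W}, I would substitute $t=s|x|$ in $\int_a^{+\infty} G_s(x)\,ds$, starting from $G_s(x)=\frac{s^{d/2-1}|x|^{1-d/2}}{(2\pi)^{d/2}}K_{d/2-1}(s|x|)$; the powers of $|x|$ collapse to $|x|^{1-d}$ and what remains is exactly $W(ax)=W_a(x)$. For \eqref{deriv F is double deriv G}, I would plug \eqref{formula daGa eq1} into $-a^2\partial_a(G_a(x)/|x|^2)$ and compare with $F_a(x)/|x|$ after using $G(ax)=a^{2-d}G_a(x)$ and $H(ax)=a^{2-d}H_a(x)$ to collect powers of $a$; the two expressions coincide on the nose.

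For part $(i)$, I would first note that by \eqref{K int exp form} the function $t\mapsto t^{d/2-1}K_{d/2-1}(t)$ is nonnegative, is $O(t^{d/2-3/2}e^{-t})$ at infinity, and is integrable near $0$ because for $d\geq 3$ the standard asymptotic of $K_{d/2-1}$ gives at most an $O(t^{1-d/2})$ singularity there (with exponent $\geq -1/2$). The limit $\lim_{a\to+\infty}W_a(x)=0$ is then immediate, and $\lim_{a\to 0}W_a(x)=\kappa$ follows by monotone convergence. To identify the second expression for $\kappa$, I would insert \eqref{K int exp form} into the definition of $\kappa$ and apply Fubini (all integrands are positive); the inner $t$-integral reduces to $\int_0^{+\infty} t^{d/2-1}e^{-t\cosh r}\,dt=\Gamma(d/2)(\cosh r)^{-d/2}$ via the Gamma identity, and the prefactor $\Gamma(d/2)/(2\pi)^{d/2}$ matches $2^{1-d/2}/|\sph|$ thanks to the classical formula $|\sph|=2\pi^{d/2}/\Gamma(d/2)$.

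For part $(ii)$, I would use the recurrence $K_\alpha'(t)=-K_{\alpha-1}(t)-(\alpha/t)K_\alpha(t)$ with $\alpha=d/2-1$ to rewrite the bracket $(1-d/2)K_{d/2-1}(|x|)/|x|-K_{d/2-1}'(|x|)$ appearing in the definition of $F$ as $K_{d/2-2}(|x|)$, so that $F(x)=\frac{|x|^{1-d/2}}{(2\pi)^{d/2}}K_{d/2-2}(|x|)$; positivity is then immediate from \eqref{K int exp form} (using the symmetry $K_{-\alpha}=K_\alpha$ when $d=3$). Passing to polar coordinates, the integrability statement reduces to showing $\int_0^{+\infty}(r^{d/2}+r^{d/2-1})K_{d/2-2}(r)\,dr<+\infty$. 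The exponential tail of $K_{d/2-2}$ handles the large-$r$ contribution with room to spare, while near the origin I would split into three regimes by dimension: for $d\geq 5$ the estimate $K_{d/2-2}(r)=O(r^{2-d/2})$ makes the integrand $O(r)$; for $d=4$ the logarithmic growth $K_0(r)=O(|\log r|)$ is harmless; and for $d=3$ the closed form $K_{1/2}(r)=\sqrt{\pi/(2r)}\,e^{-r}$ gives an $O(r^{-1})$ integrand that is integrable against the remaining factors.

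The only step that is not essentially bookkeeping is the near-origin analysis in part $(ii)$, which is mildly dimension-dependent and forces the case split above; everything else is a direct matter of substituting the explicit form of $G_a$, $H_a$, $F_a$ and invoking the standard identities for $K_\alpha$ and the Gamma function.
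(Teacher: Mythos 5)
Your proposal is correct, and for the identities \eqref{int Gs is W}, \eqref{deriv F is double deriv G} and for part $(i)$ it coincides with the paper's proof (change of variables; substitution of \eqref{formula daGa eq1}; the representation \eqref{K int exp form} plus Fubini and the Gamma identity $\Gamma(d/2)|\sph|=2\pi^{d/2}$). Where you genuinely diverge is part $(ii)$. The paper proves $F>0$ indirectly: it uses \eqref{formula daGa eq2} and the positivity of $G_a$ to show that $a\mapsto a^{3-d}\partial_a\big(G_a(x)/|x|^2\big)$ is strictly increasing and tends to $0$ at infinity, hence is negative, and evaluates at $a=1$; and it proves the integrability of $(1+|x|^{-1})F$ via the cosine-integral representation of $K_\alpha$ from \cite[9.6.25]{AbraStegun}, estimating $tK_\alpha'(t)+\alpha K_\alpha(t)$ near $0$ and at infinity with a case split $\alpha>1/2$ versus $\alpha=1/2$. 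You instead invoke the standard recurrence $K_\alpha'(t)=-K_{\alpha-1}(t)-(\alpha/t)K_\alpha(t)$ to identify $F$ in closed form, $F(x)=(2\pi)^{-d/2}|x|^{1-d/2}K_{d/2-2}(|x|)$, after which positivity is immediate (using $K_{-\beta}=K_\beta$ for $d=3$) and integrability reduces to the textbook small- and large-argument asymptotics of $K_{d/2-2}$; your dimension split $d\geq5$, $d=4$, $d=3$ is the right bookkeeping (for $d=3$ the statement is best read as $F(x)=O(|x|^{-1}e^{-|x|})$, which indeed makes $r^{d-1}(1+r^{-1})F$ bounded near the origin). Your route is shorter and more transparent: it replaces the paper's oscillatory-integral estimates by a single algebraic identity, and it also makes the failure at $d=2$ visible at a glance, since then $F=\frac{1}{2\pi}K_1(|\cdot|)\sim\frac{1}{2\pi}|\cdot|^{-1}$ exactly as in Remark \ref{rmk d>2}. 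What the paper's argument buys in exchange is that it never leaves the circle of facts already established about $G_a$ and $H_a$ (it reuses Lemma \ref{formula daGa} rather than introducing the order-lowering recurrence), but at the price of a more delicate asymptotic analysis.
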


\begin{proof}
Using \eqref{def Ga}, \eqref{def G} and a change of variables, we obtain
\begin{equation}
\begin{split}
\int_a^{+\infty}G_s(x)\,ds
&=\int_a^{+\infty}s^{d-2}\frac{|sx|^{1-d/2}}{(2\pi)^{d/2}}\,K_{d/2-1}(|sx|)\,ds\\
&=\frac{|x|^{1-d}}{(2\pi)^{d/2}}\int_{a|x|}^{+\infty}t^{d/2-1}K_{d/2-1}(t)\,dt
=|x|^{1-d}W_a(x),
\end{split}
\end{equation}
which is \eqref{int Gs is W}. Regarding \eqref{deriv F is double deriv G}, by \eqref{formula daGa eq1}, \eqref{def Ga} \eqref{def F} and \eqref{def Fa},
\begin{equation}
\begin{split}
\frac{\partial}{\partial a}\Big(\frac{G_a(x)}{|x|^2}\Big)
=\frac{H_a(x)}{|x|}+\Big(\frac{d}{2}-1\Big)\frac{G_a(x)}{a|x|^2}
=\frac{a^{d-2}}{|x|}\Big(H(ax)+\Big(\frac{d}{2}-1\Big)\frac{G(ax)}{|ax|}\Big)
=-\frac{F_a(x)}{a^2|x|}.
\end{split}
\end{equation}

We now adress to $(i)$ and $(ii)$ in the lemma. Regarding $(i)$, it is clear from 
\eqref{K int exp form} that $\kappa>0$. Moreover, by \eqref{K int exp form}, Fubini's theorem and a change of variables we see that, for every $\alpha\geq0$,
\begin{equation}\label{lemma W0 eq1}
\begin{split}
\int_{0}^{+\infty}t^{\alpha}K_{\alpha}(t)\,dt
&=\int_{0}^{+\infty}\!\!\int_0^{+\infty} t^{\alpha} e^{-t\cosh r}\cosh(\alpha r)\,dr\,dt\\
&=\int_{0}^{+\infty}\frac{\cosh(\alpha r)}{(\cosh r)^{\alpha+1}}\int_0^{+\infty} 
s^{\alpha} e^{-s}\,ds\,dr
=\Gamma(\alpha+1)\int_{0}^{+\infty}\frac{\cosh(\alpha r)}{(\cosh r)^{\alpha+1}}\,dr,
\end{split}
\end{equation}
where $\Gamma$ denotes the Gamma function. Since $\Gamma(d/2)|\sph|=2\pi^{d/2}$, see \cite[Proposition 0.7]{Folland}, \eqref{lemma W0 eq1} proves \eqref{W(0)}.
Observe also that, for $r>0$,
\begin{equation}\label{lemma W0 eq2}
\begin{split}
0<\frac{\cosh(\alpha r)}{(\cosh r)^{\alpha+1}}
=2^{\alpha}\frac{e^{\alpha r}+e^{-\alpha r}}{(e^r+e^{-r})^{\alpha+1}}
\leq2^{\alpha}\frac{2e^{\alpha r}}{e^{(\alpha+1)r}}=2^{\alpha+1}e^{-r},
\end{split}
\end{equation}
which is integrable in $(0,+\infty)$. Therefore, \eqref{lemma W0 eq1} and \eqref{lemma W0 eq2} show that $\kappa<+\infty$. With this at hand, that 
$\lim_{a\to+\infty}W_a(x)=0$ and 
$\lim_{a\to0}W_a(x)=\kappa$ follow by dominated convergence. The proof of $(i)$ is complete.

In order to prove $(ii)$ we need to use the asymptotic behavior of $K_\alpha(t)$ and $K_\alpha'(t)$ as $t\to+\infty$ when $\alpha\geq 0$. 
By \cite[page 206]{Watson}, we know that
\begin{equation}\label{lemma W0 eq3}
K_\alpha(t)=\Big(\frac{\pi}{2t}\Big)^{1/2}\frac{e^{-t}}{\Gamma(\alpha+1/2)}
\int_0^{+\infty}e^{-r}r^{\alpha-1/2}\Big(1+\frac{r}{2t}\Big)^{\alpha-1/2}\,dr.
\end{equation}
Therefore, for $t>0$ big enough,
\begin{equation}\label{lemma W0 eq3 1}
K_\alpha(t)\leq\Big(\frac{\pi}{2t}\Big)^{1/2}\frac{2^{\alpha}e^{-t}}{\Gamma(\alpha+1/2)}\Big\{
\int_0^{1}\frac{dr}{\sqrt{r}}
+\int_1^{+\infty}e^{-r}r^{2\alpha}\,dr\Big\}.
\end{equation}
From \eqref{lemma W0 eq3 1} 
we deduce that there exists $C_\alpha>0$ only depending on $\alpha$ such that 
\begin{equation}\label{lemma W0 eq3 3}
K_\alpha(t)\leq C_\alpha t^{-1/2}e^{-t}\quad\text{for }t\to+\infty.
\end{equation}
Concerning $K'_\alpha$, note that
\begin{equation}
\cosh r\cosh(\alpha r)=\frac{1}{4}(e^r+e^{-r})(e^{\alpha r}+e^{-\alpha r})
=\frac{1}{2}\big(\cosh((\alpha+1) r)+\cosh(|\alpha-1| r)\big),
\end{equation}
thus using \eqref{K int exp form} we see that
\begin{equation}\label{lemma W0 eq3 4}
\begin{split}
K_{\alpha}'(t)=-\int_0^{+\infty} e^{-t\cosh r}\cosh r\cosh(\alpha r)\,dr
=-\frac{1}{2}\big(K_{\alpha+1}(t)+K_{|\alpha-1|}(t)\big)
\end{split}
\end{equation}
for all $t>0$. Then, \eqref{lemma W0 eq3 4} and \eqref{lemma W0 eq3 3} prove that
\begin{equation}\label{lemma W0 eq3 5}
|K_\alpha'(t)|\leq C_\alpha t^{-1/2}e^{-t}\quad\text{for }t\to+\infty.
\end{equation}
for some $C_\alpha>0$ only depending on $\alpha$. 

With these estimates at hand, we are ready to deal with the first statement in $(ii)$. Fix $x\in\R^d\setminus\{0\}$. From \eqref{formula daGa eq2} and using that $K_{d/2-1}$ is a positive function, we know that
\begin{equation}\label{lemma W0 eq3 6}
\frac{\partial}{\partial a}\Big(a^{3-d}\frac{\partial}{\partial a}
\Big(\frac{G_a(x)}{|x|^2}\Big)\Big)=a^{3-d}G_a(x)>0
\end{equation}
for all $a>0$. Additionally, using \eqref{formula daGa eq1}, \eqref{def Ga} and 
\eqref{def G} we see that
\begin{equation}
\begin{split}
a^{3-d}\frac{\partial}{\partial a}\Big(\frac{G_a(x)}{|x|^2}\Big)
&=a\frac{H(ax)}{|x|}+\Big(\frac{d}{2}-1\Big)\frac{G(ax)}{|x|^2}\\
&=a^{2-d/2}\frac{|x|^{-d/2}}{(2\pi)^{d/2}}\,K_{d/2-1}'(a|x|)
+\Big(\frac{d}{2}-1\Big)a^{1-d/2}\frac{|x|^{-1-d/2}}{(2\pi)^{d/2}}\,K_{d/2-1}(a|x|),
\end{split}
\end{equation}
and therefore 
\begin{equation}\label{lemma W0 eq3 7}
\lim_{a\to+\infty}a^{3-d}\frac{\partial}{\partial a}\Big(\frac{G_a(x)}{|x|^2}\Big)=0
\end{equation}
by \eqref{lemma W0 eq3 3} and \eqref{lemma W0 eq3 5}. 
In conclusion, \eqref{lemma W0 eq3 6} and \eqref{lemma W0 eq3 7} prove that 
$a^{3-d}\frac{\partial}{\partial a}(\frac{G_a(x)}{|x|^2})$, as a function of $a>0$, 
is strictly increasing and converges to $0$ at infinity, thus 
\begin{equation}\label{lemma W0 eq3 8}
a^{3-d}\frac{\partial}{\partial a}\Big(\frac{G_a(x)}{|x|^2}\Big)<0
\end{equation}
for all $a>0$. Then, applying \eqref{formula daGa eq1} to \eqref{lemma W0 eq3 8}
and taking $a=1$, we obtain that
\begin{equation}
0>\frac{\partial}{\partial a}\Big(\frac{G_a(x)}{|x|^2}\Big)\biggr\rvert_{a=1}
=\frac{H(x)}{|x|}+\Big(\frac{d}{2}-1\Big)\frac{G(x)}{|x|^2}=-\frac{F(x)}{|x|},
\end{equation}
where we used \eqref{def F} in the last equality above. Therefore, $F(x)>0$ for all $x\in\R^d\setminus\{0\}$.

Finally, let us address the second statement in $(ii)$. For this purpose, we need to study the asymptotic behavior of $tK_\alpha'(t)+\alpha K_\alpha(t)$ as $t\to0$ when $\alpha\geq1/2$. We are going to consider two different cases: $\alpha>1/2$ and $\alpha=1/2$, which correspond to $d>3$ and $d=3$, respectively, since we are denoting $\alpha=d/2-1$. Assume first that $\alpha>1/2$. Using \cite[9.6.25]{AbraStegun} we can write
\begin{equation}\label{lemma W0 eq7 bis}
K_\alpha(t)=\frac{2^\alpha\Gamma(\alpha+1/2)}{\sqrt{\pi}t^\alpha}
\int_0^{+\infty}\frac{\cos(tr)}{(r^2+1)^{\alpha+1/2}}\,dr
\end{equation}
for all $t>0$, thus 
\begin{equation}\label{lemma W0 eq7}
\begin{split}
|tK_\alpha'(t)+\alpha K_\alpha(t)|&=\frac{2^\alpha\Gamma(\alpha+1/2)}{\sqrt{\pi}t^{\alpha-1}}\,\Big|\int_0^{+\infty}\frac{r\sin(tr)}{(r^2+1)^{\alpha+1/2}}\,dr\Big|.
\end{split}
\end{equation}
These computations are justified because the integrals appearing in \eqref{lemma W0 eq7 bis} and \eqref{lemma W0 eq7} converge absolutely, since we are assuming that $\alpha>1/2$. By a change of variables, if $t>0$ is small enough,
\begin{equation}\label{lemma W0 eq8}
\begin{split}
\Big|\int_0^{+\infty}\frac{r\sin(tr)}{(r^2+1)^{\alpha+1/2}}\,dr\Big|
&=t^{2\alpha-1}\Big|\int_0^{+\infty}\frac{s\sin(s)}{(s^2+t^2)^{\alpha+1/2}}\,ds\Big|\\
&\leq t^{2\alpha-1}\Big\{\int_0^{1}\frac{s\,ds}{(s^2+t^2)^{\alpha+1/2}}
+\int_1^{+\infty}s^{-2\alpha}\,ds\Big\}\\
&\leq t^{2\alpha-1}\Big\{\frac{t^{-2\alpha+1}}{2\alpha-1}
+\int_1^{+\infty}s^{-2\alpha}\,ds\Big\}\leq C.
\end{split}
\end{equation}
Therefore, \eqref{lemma W0 eq7} and \eqref{lemma W0 eq8} yield that 
$|tK_\alpha'(t)+\alpha K_\alpha(t)|\leq C$ if $t>0$ is small enough. Combining this with \eqref{lemma W0 eq3 3} and \eqref{lemma W0 eq3 5} we finally deduce that, for 
$\alpha>1/2$, 
\begin{equation}\label{lemma W0 eq3 9}
\begin{cases}
|tK_\alpha'(t)+\alpha K_\alpha(t)|\leq O(e^{-t/2}) &\quad\text{for }t\to+\infty,\\
|tK_\alpha'(t)+\alpha K_\alpha(t)|\leq O(1)&\quad\text{for }t\to0.
\end{cases}
\end{equation}

Assume now that $\alpha=1/2$. In this case $K_{\alpha}$ has a simple representation (see \cite[10.2.17]{AbraStegun} for example), that is,
$K_{1/2}(t)=\sqrt{\frac{\pi}{2}}\,t^{-1/2}e^{-t}$ for $t>0$. Then, 
\begin{equation}
\Big|tK_{1/2}'(t)+\frac{1}{2}K_{1/2}(t)\Big|
=\sqrt{\frac{\pi}{2}}\,t^{1/2}e^{-t}.
\end{equation}
Using also \eqref{lemma W0 eq3 3} and \eqref{lemma W0 eq3 5}, we conclude that
\begin{equation}\label{lemma W0 eq3 9 bis}
\begin{cases}
\big|tK_{1/2}'(t)+\frac{1}{2} K_{1/2}(t)\big|
\leq O(e^{-t/2}) &\quad\text{for }t\to+\infty,\\
\big|tK_{1/2}'(t)+\frac{1}{2} K_{1/2}(t)\big|\leq O(t^{1/2})&\quad\text{for }t\to0.
\end{cases}
\end{equation}

We are ready to prove the second statement in $(ii)$. By \eqref{def F}, \eqref{def G} and a change of variables to polar coordinates, we have
\begin{equation}\label{lemma F integrable eq1}
\begin{split}
\int_{\R^d}\!\Big(1+\frac{1}{|x|}\Big)F(x)\,dx
&=\frac{|\sph|}{(2\pi)^{d/2}}\int_{0}^{+\infty}\!\!r^{d/2-2}(r+1)
\Big\{\Big(1-\frac{d}{2}\Big)K_{d/2-1}(r)-rK_{d/2-1}'(r)\Big\}\,dr.
\end{split}
\end{equation}
Then, that $\int_{\R^d}(1+|x|^{-1})F(x)\,dx<+\infty$ follows by 
\eqref{lemma W0 eq3 9} if $d>3$ and by \eqref{lemma W0 eq3 9 bis} if $d=3$.
\end{proof}

\begin{remark}\label{rmk d>2}
Assume that $d=2$. Then, the estimates in the proof of Lemma 
\ref{lemma W F}$(ii)$ to bound the integral 
$\int_{\R^d}(1+|x|^{-1})F(x)\,dx$ fail. Indeed, by \eqref{def F}, \eqref{def G} and \eqref{lemma W0 eq3 4}, we have
\begin{equation}\label{lemma F integrable eq2}
F(x)=-H(x)=-\frac{1}{2\pi}\,K_{0}'(|x|)=\frac{1}{2\pi}\,K_{1}(|x|).
\end{equation}
It is known that $K_{1}(t)\sim\Gamma(1)t^{-1}$ for $t\to0$, see \cite[9.6.9]{AbraStegun}. 
But then, arguing as in 
\eqref{lemma F integrable eq1} and using \eqref{lemma F integrable eq2}, we see that
\begin{equation}
\begin{split}
\int_{\R^2}|x|^{-1}F(x)\,dx
&=-\int_{0}^{+\infty}K_{0}'(r)\,dr
=\int_{0}^{+\infty}K_{1}(r)\,dr=+\infty,
\end{split}
\end{equation}
thus the second statement in Lemma \ref{lemma W F}$(ii)$ does not hold when $d=2$. We must stress that this is the unique point where we require that $d\geq3$;  the finiteness of $\int_{\R^d}|x|^{-1}F(x)\,dx$ is used in \eqref{limit eq1} below. The rest of the arguments in the article work for all integer $d\geq2$.
\end{remark}


Let $\Omega\subset\R^d$ be a bounded $C^2$ domain and $a>0$. By 
\eqref{def Fa}, \eqref{def F}, and \eqref{def G}, we see that $\Phi(\Omega,a)$ defined in \eqref{defi Phi intro} rewrites as
\begin{equation}\label{Phi definition}
\begin{split}
\Phi(\Omega,a)&=
\int_{\paO}\!\int_{\paO}W_a(x-y)\frac{|\nu(x)-\nu(y)|^2}{|x-y|^{d-1}}
\,d\upsigma(x)\,d\upsigma(y)
+4\int_{\Omega^c}\!\int_{\Omega}\frac{F_a(x-y)}{|x-y|}\,dx\,dy.
\end{split}
\end{equation}
For simplicity of notation, we also introduce the constant 
\begin{equation}\label{c_d}
\tilde{\kappa}:=\int_{\sph}|x-e|^{3-d}\,d\upsigma(x)<+\infty,
\end{equation} 
where $e\in\sph$ is any unit vector. For example, when $d=3$ we trivially get 
$\tilde{\kappa}=|\mathbb{S}^2|=4\pi$.

\begin{proof}[Proof of Theorem {\em \ref{main thm}}]
Thanks to \eqref{int Gs is W} and \eqref{deriv F is double deriv G}, we see that
\begin{equation}
\begin{split}
\frac{\partial\Phi}{\partial a}(\Omega,a)&=
-\int_{\paO}\!\int_{\paO}G_a(x-y)|\nu(x)-\nu(y)|^2
\,d\upsigma(x)\,d\upsigma(y)\\
&\quad-4\int_{\Omega^c}\!\int_{\Omega}\frac{\partial}{\partial a}
\Big(a^2\frac{\partial}{\partial a}
\Big(\frac{G_a(x-y)}{|x-y|^2}\Big)\Big)\,dx\,dy.
\end{split}
\end{equation}
Then \eqref{monotony} follows directly from Theorem \ref{thm fund ineq}, which also shows that the equality in \eqref{monotony} is attained for some (and thus for all) $a>0$ if and only if $\Omega\subset\R^d$ is a ball.

We now focus on \eqref{limit infty}. Given $R>0$, set $B_R:=\{x\in\R^d: |x|<R\}$.
Take $R$ big enough so that 
$\overline\Omega\subset B_{R/2}$.
Then, we can split
\begin{equation}\label{l limit split}
\begin{split}
\int_{\Omega^c}\!\int_{\Omega}\frac{F_a(x-y)}{|x-y|}\,dx\,dy
&=\int_{B_R}\!\int_{B_R}\!\!\!\frac{F_a(x-y)}{|x-y|}\,
\chi_{\Omega}(x)\chi_{\Omega^c}(y)\,dx\,dy
+\int_{B_R^c}\!\int_{\Omega}\frac{F_a(x-y)}{|x-y|}\,dx\,dy.
\end{split}
\end{equation}
In order to deal with the two terms on the right hand side of \eqref{l limit split},
recall that $F_a(x)=a^dF(ax)$ for $x\in\R^d\setminus\{0\}$ and that 
$F$ is a positive and radial function such that $0<\|F\|_{L^1(\R^d)}<+\infty$, see Lemma \ref{lemma W F}$(ii)$ and $\eqref{def Fa}$. In particular, a change of variables gives
$\int_{\R^d}F_a(x)\,dx=\|F\|_{L^1(\R^d)}$ and, for every $\epsilon>0$, 
\begin{equation}\label{l limit app ident}
\lim_{a\to+\infty}\int_{|x|>\epsilon}F_a(x)\,dx
=\lim_{a\to+\infty}\int_{|x|>\epsilon a}F(x)\,dx=0.
\end{equation}
Therefore, $F_a/\|F\|_{L^1(\R^d)}$ is a positive and radial approximation of the identity as 
$a\to+\infty$.

Concerning the first term on the right hand side of \eqref{l limit split}, since $F_a$ is radial, an application of Fubini's theorem gives that
\begin{equation}
\begin{split}
\int_{B_R}\!\int_{B_R}\frac{F_a(x-y)}{|x-y|}\,
\chi_{\Omega}(x)\chi_{\Omega^c}(y)\,dx\,dy&=
\frac{1}{2}\int_{B_R}\!\int_{B_R}F_a(x-y)
\frac{|\chi_{\Omega}(x)-\chi_{\Omega}(y)|}{|x-y|}\,dx\,dy.
\end{split}
\end{equation}
Therefore, \cite[Theorem 1]{Davila} shows that
\begin{equation}\label{eq davila}
\lim_{a\to+\infty}\int_{B_R}\!\int_{B_R}\frac{F_a(x-y)}{|x-y|}\,
\chi_{\Omega}(x)\chi_{\Omega^c}(y)\,dx\,dy
=C_0|\chi_{\Omega}|_{BV(B_R)},
\end{equation}
where $C_0>0$ is some constant only depending on $d$ and
\begin{equation}
|\chi_{\Omega}|_{BV(B_R)}
:=\sup\Big\{\int_{B_R}\chi_{\Omega}\,\ddiv\varphi:\,\varphi\in C^1_c(B_R;\R^d),\, |\varphi|\leq1\text{ in }B_R\Big\}.
\end{equation}
It is well known that $|\chi_{\Omega}|_{BV(B_R)}=C_1|\paO|$ whenever $\overline\Omega\subset B_R$ (see \cite{Maggi}, for example), where $C_1>0$ is some constant only depending on $d$. Thus \eqref{eq davila} yields
\begin{equation}\label{l limit eq2}
\lim_{a\to+\infty}\int_{B_R}\!\int_{B_R}\frac{F_a(x-y)}{|x-y|}\,
\chi_{\Omega}(x)\chi_{\Omega^c}(y)\,dx\,dy
=C_2|\paO|
\end{equation}
for some constant $C_2>0$ only depending on $d$.

Regarding the second term on the right hand side of \eqref{l limit split}, using that 
$\overline\Omega\subset B_{R/2}$, Fubini's theorem and a change of variable, we can easily estimate
\begin{equation}
\begin{split}
\int_{B_R^c}\!\int_{\Omega}\frac{F_a(x-y)}{|x-y|}\,dx\,dy
&\leq\int_{B_R^c}\!\int_{B_{R/2}}
\frac{F_a(x-y)}{|x-y|}\,dx\,dy\\
&\leq\frac{2}{R}\int_{B_{R/2}}\!\int_{|x-y|>R/2}
F_a(x-y)\,dy\,dx
=\frac{2}{R}\,|B_{R/2}|\int_{|y|>R/2}F_a(y)\,dy,
\end{split}
\end{equation}
thus \eqref{l limit app ident} yields
\begin{equation}\label{l limit eq1}
\lim_{a\to+\infty}\int_{B_R^c}\!\int_{\Omega}\frac{F_a(x-y)}{|x-y|}\,dx\,dy=0.
\end{equation}

By \eqref{def Fa}, \eqref{def F} and \eqref{W(0)}, we have $0<W_a(x)<\kappa$ for all $a>0$ and 
$x\in\R^d\setminus\{0\}$ because $K_{d/2-1}$ is a positive function. Also, by the regularity of $\Omega$, there exists $M>0$ such that $|\nu(x)-\nu(y)|\leq M|x-y|$. Hence we can estimate
\begin{equation}
0\leq W_a(x-y)\frac{|\nu(x)-\nu(y)|^2}{|x-y|^{d-1}}\leq \kappa M^2|x-y|^{3-d},
\end{equation}
which is absolutely integrable in $\paO\times\paO$. Therefore,
by dominated convergence and Lemma \ref{lemma W F}$(i)$ we get
\begin{equation}\label{l limit eq3}
\begin{split}
\lim_{a\to+\infty}\int_{\paO}\!\int_{\paO}W_a(x-y)\frac{|\nu(x)-\nu(y)|^2}{|x-y|^{d-1}}
\,d\upsigma(x)\,d\upsigma(y)=0.
\end{split}
\end{equation}
Finally, a combination of \eqref{l limit split}, \eqref{l limit eq2}, \eqref{l limit eq1}  and \eqref{l limit eq3} shows that 
\begin{equation}\label{l limit eq4}
\begin{split}
\lim_{a\to+\infty}\Phi(\Omega,a)=C_3|\paO|.
\end{split}
\end{equation}
for some constant $C_3>0$ only depending on $d$.
The precise value of $C_3$ can be tracked from \cite{Davila} and computing 
$\|F\|_{L^1(\R^d)}$. 
However, later on we will easily deduce that $C_3=\kappa \tilde{\kappa}$ with 
${\kappa}$ as in Theorem \ref{main thm} and ${\tilde{\kappa}}$ as in \eqref{c_d} by looking at the case of balls. This will yield \eqref{limit infty}. Once this is known, the fact that
$\Phi(\Omega,a)=\kappa \tilde{\kappa}|\paO|$ for all $a>0$ if $\Omega$ is a ball and that 
$\Phi(\Omega,a)$ is strictly decreasing in $a\in(0,+\infty)$ and converges to 
$\kappa \tilde{\kappa}|\paO|$ when $a\to+\infty$ if $\Omega$ is not a ball follows by \eqref{monotony} and \eqref{limit infty}.

Let us now deal with \eqref{limit zero}.  A change of variables and Lemma \ref{lemma W F}$(ii)$ show that
\begin{equation}
\begin{split}
\frac{1}{a}\int_{\R^d}\frac{F_a(x)}{|x|}\,dx
=a^{d}\int_{\R^d}\frac{F(ax)}{|ax|}\,dx
=\int_{\R^d}\frac{F(y)}{|y|}\,dy<+\infty.
\end{split}
\end{equation}
Hence,
\begin{equation}\label{limit eq1}
\begin{split}
0\leq\lim_{a\to0}\int_{\Omega^c}\!\int_{\Omega}\frac{F_a(x-y)}{|x-y|}\,dx\,dy
&\leq\lim_{a\to0}\int_{\Omega}\!\int_{\R^d}\frac{F_a(x-y)}{|x-y|}\,dy\,dx\\
&\leq|\Omega|\int_{\R^d}\frac{F(y)}{|y|}\,dy\lim_{a\to0}a=0.
\end{split}
\end{equation}
Additionally, by monotone convergence and Lemma \ref{lemma W F}$(i)$,
\begin{equation}\label{limit eq3}
\begin{split}
\lim_{a\to0}
\int_{\paO}\!\int_{\paO}W_a(x-y)&\frac{|\nu(x)-\nu(y)|^2}{|x-y|^{d-1}}
\,d\upsigma(x)\,d\upsigma(y)\\
&=\kappa \int_{\paO}\!\int_{\paO}\frac{|\nu(x)-\nu(y)|^2}{|x-y|^{d-1}}
\,d\upsigma(x)\,d\upsigma(y).
\end{split}
\end{equation}
Then \eqref{limit zero} is a consequence of \eqref{limit eq1} and \eqref{limit eq3}.

Finally, assume that $\Omega$ is a ball of radius $R>0$. Then $|\paO|=|\sph|R^{d-1}$, thus by \eqref{l limit eq4}, \eqref{limit zero}, and the equality in \eqref{monotony} we see that
\begin{equation}\label{limit eq5}
\begin{split}
C_3|\paO|&=\kappa\int_{\paO}\!\int_{\paO}\frac{|\nu(x)-\nu(y)|^2}{|x-y|^{d-1}}
\,d\upsigma(x)\,d\upsigma(y)\\
&=\kappa R^{d-1}|\sph|\int_{\sph}|x-e|^{3-d}
\,d\upsigma(x)
=\kappa|\paO|\int_{\sph}|x-e|^{3-d}\,d\upsigma(x),
\end{split}
\end{equation}
where $e\in\sph$ is any unit vector. We used the invariance of $\sph$ under rotations in the second equality above. Then, \eqref{limit eq5} leads to
$C_3=\kappa \tilde{\kappa}$.
In particular, \eqref{l limit eq4} gives \eqref{limit infty}.
This finishes the proof of the theorem.
\end{proof}

\section{Proof of \eqref{ineq norm1 eq2}}\label{s remark norm1}
By \cite[Proposition 3.19]{Folland} we know that
\begin{equation}
-\int_\paO\frac{(x-y)\cdot\nu(y)}{|x-y|^d}\,d\upsigma(y)=\frac{1}{2}\,|\sph|
\end{equation}
for all $x\in\paO$. If we integrate this equality over all $x\in\paO$, we symmetrize the resulting integral, and we apply Cauchy-Schwarz inequality on the integrand, we get
\begin{equation}\label{ineq norm1 eq1}
\begin{split}
\frac{1}{2}\,|\sph||\paO|
&=-\int_\paO\!\int_\paO\frac{(x-y)\cdot\nu(y)}{|x-y|^d}\,d\upsigma(y)\,d\upsigma(x)\\
&=\frac{1}{2}\int_\paO\!\int_\paO\frac{x-y}{|x-y|^d}\cdot\big(\nu(x)-\nu(y)\big)\,d\upsigma(y)\,d\upsigma(x)\\
&\leq\frac{1}{2}\int_\paO\!\int_\paO\frac{|\nu(x)-\nu(y)|}{|x-y|^{d-1}}\,d\upsigma(y)\,d\upsigma(x),
\end{split}
\end{equation}
which is \eqref{ineq norm1 eq2}.
Furthermore, the equality in \eqref{ineq norm1 eq1} is attained if and only if 
$\nu(x)-\nu(y)=\lambda(x-y)$ for some constant $\lambda>0$ and all $x,\,y\in\paO$, and this holds if and only if $\Omega$ is a ball of radius $1/\lambda$.

\end{document}